\newcommand{\bx}{\bm{x}}
\newcommand{\bp}{\bm{p}}
\newcommand{\bu}{\bm{u}}
\newcommand{\bbR}{\mathbb{R}}
\newcommand{\bbC}{\mathbb{C}}
\newcommand{\calR}{\mathcal{R}}
\newtheorem*{proof}{Proof}
\newtheorem{thm}{Theorem}[section]
\newtheorem{proposition}[thm]{Proposition}
\newtheorem{remark}[thm]{Remark}
\newtheorem{example}[thm]{Example}
\begin{document}
\renewcommand\arraystretch{1}

\title{A general alternating-direction implicit Newton method for solving complex continuous-time algebraic Riccati matrix equation
\thanks{The work was supported in part by the National Natural Science Foundation
of China (12171412, 11771370), Natural Science Foundation for Distinguished Young Scholars of
Hunan Province (2021JJ10037), Hunan Youth Science and Technology Innovation Talents Project
(2021RC3110), the Key Project of Education Department of Hunan Province (19A500, 21A0116).}}

\author{ Shifeng Li, Kai Jiang~~Juan Zhang\thanks{Corresponding author(zhangjuan@xtu.edu.cn).} \thanks{Key Laboratory of Intelligent Computing and Information Processing of Ministry of Education, Hunan Key Laboratory for Computation and Simulation in Science and Engineering, School of Mathematics and Computational
		Science, Xiangtan University, Xiangtan, Hunan,
		411105, P. R. China.}}

\date{}
\maketitle
\setlength{\parindent}{2em}
\textbf{Abstract}
In this paper, applying the Newton method, we transform the complex continuous-time algebraic Riccati matrix equation into a Lyapunov equation. Then, we introduce an efficient general alternating-direction implicit (GADI) method to solve the Lyapunov equation. The inexact Newton-GADI method is presented to save computational amount effectively. Moreover, we analyze the convergence of the Newton-GADI method. The convergence rate of the Newton-GADI and Newton-ADI methods is compared  by analyzing
their spectral radii. Furthermore, we give a way to select the quasi-optimal parameter.
Corresponding numerical tests are shown to illustrate the effectiveness of the proposed algorithms.

\textbf{Keywords}
Complex algebraic Riccati equation; Newton method;
alternating-direction implicit method; convergence analysis.

\section{Introduction}

In this paper, we consider the complex continuous-time algebraic
Riccati matrix equation (CARE) defined by
\begin{align}
A^*X+XA-XKX+Q=0,
\label{eq:CARE}
\end{align}
where $A,~K,~Q\in\bbC^{n\times n}$,
$K=K^*$, $Q=Q^*$, and $X$ is an unknown matrix.

The CARE arises in many areas such as linear and non-linear optimal control systems \cite{P.L.1995Algebraic}-\cite{G.1990Worst},
 Wiener-Hopf factorization of Markov chains \cite{D.1982potential},
 total least squares problems \cite{B.J.1992total}, spectral factorization of rational matrix functions \cite{K.I.1981factorization}-\cite{I.M.1987inverse},
 matrix sign functions \cite{J.1980linear}-\cite{A.1991invariant}, and transport theory \cite{J.W.1999nonsymmetric}. For example, consider the linear time-invariant system represented by
\begin{align*}
\dot{\bp}(t)=A\bp(t)+B\bu(t),~~\bp(0)=\bp_0,
\end{align*}
where $B\in \bbC^{n\times m}$, $A$ is defined by \eqref{eq:CARE},
$\bp(t)\in\bbC^{n}$ and $\bu(t)\in\bbC^{m}$ are the state and control vectors of the system.
We can represent $u(t)=-R^{-1}B^{*}\bp(t)$ for a vector $\bx(t)$ such that the cost functional
\begin{align}
J(u)=\int^{t_1}_{t_0} [\bp^{*}(t)Q \bp(t)+\bu^*(t)R\bu(t)]\,dt+\bp^{*}(t_1)Z_1 \bp(t_1)
\label{eq:def-cost-J}
\end{align}
is minimized, where the cost penalty matrix $Q$ defined by \eqref{eq:CARE} is positive semi-definite, and
the cost penalty matrices $R,~Z_1$ are positive definite.
Taken $K=BR^{-1}B^*$, defined by \eqref{eq:CARE}, the state equation becomes
\begin{align*}
\dot{\bp}(t)=A\bp(t)-K\bx(t).
\end{align*}
The cost function reduces to
\begin{align*}
\hat{J}(u)=\int^{t_1}_{t_0} [\bp^{*}(t)Q \bp(t)+\bx^*(t)K\bx(t)]\,dt+\bp^{*}(t_1)Z_1 \bp(t_1).
\end{align*}
We wish to choose $\bx(t)$ to minimize $\hat{J}$.
This can be accomplished by recognizing the fact input
$\bx(t)+\delta\hat{\bx}(t)$,
which is some deviation from the desired stabilizing input.
The corresponding state is $\bp(t)+\delta\hat{\bp}(t)$ (by linearity).
It can be shown that $J$ is a minimum when the derivative of the adjusted
cost function $\hat{J}$ with respect to $\delta$ is zero \cite{J.2008Riccati}, \textit{i.e.,}
\begin{align*}
\int^{t_1}_{t_0} [\bp^{*}(t)Q \hat{\bp}(t)+\bx^*(t)A\hat{\bp}(t)-\bx^*(t)\dot{\hat{\bp}}(t)]\,dt
+\bp^{*}(t_1)Z_1 \hat{\bp}(t_1)=0.
\end{align*}
Using integration by parts and $\hat{\bp}(t_0)=0$, we have
\begin{align*}
\int^{t_1}_{t_0} [\bp^{*}(t)Q+\bx^*(t)A+\dot{\bx}^*(t)]\hat{\bp}(t)\,dt
+[\bp^{*}(t_1)Z_1-\bx^*(t_1)] \hat{\bp}(t_1)=0.
\end{align*}
If the system is controllable, then a suitable choice of $\hat{\bx}(t)$
gives any $\hat{\bp}(t)$. This leads to the
following requirements
\begin{align*}
\begin{cases}
\bp^{*}(t)Q+\bx^*(t)A+\dot{\bx}^*(t)=0,\\
\bp^{*}(t_1)Z_1-\bx^*(t_1)=0.
\end{cases}
\end{align*}
Now the system becomes
\begin{align*}
\dot{\bp}(t)=A\bp(t)-K\bx(t).
\end{align*}
Since there exists a state transition matrix by linearity
\begin{align*}
X(t,t_1)=
\begin{pmatrix}
X_{11}(t,t_1) & X_{12}(t,t_1)\\
X_{21}(t,t_1) & X_{22}(t,t_1)
\end{pmatrix}
\end{align*}
such that
\begin{align*}
\begin{pmatrix}
\bp(t)\\
\bx(t)
\end{pmatrix}
=
\begin{pmatrix}
X_{11}(t,t_1) & X_{12}(t,t_1)\\
X_{21}(t,t_1) & X_{22}(t,t_1)
\end{pmatrix}
\begin{pmatrix}
\bp(t_1)\\
\bx(t_1)
\end{pmatrix}.
\end{align*}
Then, we have $\bx(t)=X(t)\bp(t)$, where $X(t)$ is the solution
of the continuous time differential Riccati equation \cite{J.2008Riccati}
\begin{align}
-\frac{dX(t)}{dt}=A^*X(t)+X(t)A-X(t)KX(t)+Q.
\label{eq:dCARE}
\end{align}
When $t_1\rightarrow \infty$, we get a steady state stabilizing solution $\dot{X}(t)=0$.
The continuous time differential
Riccati equation \eqref{eq:dCARE} reduces to the CARE \eqref{eq:CARE}. The optimal choice of input that minimizes the infinite horizon cost function
\begin{align*}
J(u)=\int^{\infty}_{t_0} [\bp^{*}(t)Q \bp(t)+\bu^*(t)R\bu(t)]\,dt
\end{align*}
is $\bu(t)=-R^{-1}B^*X\bp(t)$.

Therefore, there are many scholars pay much attention to studying the  CARE \eqref{eq:CARE}. Many works are concerned with the existence of
positive (semi-) definite solution for this equation.
In 1961, Kalman derived the existence condition
of the positive definite solution using observability
and controllability \cite{R.1961new}. Subsequently, the existence conditions of the positive (semi-) definite solution have been
considerably investigated under suitable assumptions in \cite{W.1968matrix}-\cite{D.B.B.2012stabilizing}.

\begin{thm}
\cite{D.B.B.2012stabilizing}
For the CARE \eqref{eq:CARE}, if the pair $(A,K)$ is stabilizable
and $(A,Q)$ is detectable, then this equation has a unique Hermitian positive semi-definite
solution $X$. $(A,K)$ is called stabilizable if there exists a matrix
$F\in\bbC^{n\times n}$ such that $A-KF$ is stable, \textit{i.e.,}
all eigenvalues are in the open left half-plane $\bbC_{-}$. The pair $(A,Q)$ is called detectable if $(A^*,Q^*)$ is stabilizable.
\end{thm}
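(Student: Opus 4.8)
The plan is to construct the solution as the limit of a monotone Newton (Kleinman) iteration and then isolate uniqueness through the stabilizing property of a positive semi-definite solution. Writing $\calR(X)=A^*X+XA-XKX+Q$ for the Riccati operator on the space of Hermitian matrices, its Fr\'{e}chet derivative at $X$ in a Hermitian direction $H$ is the Lyapunov map $\calR'(X)[H]=(A-KX)^*H+H(A-KX)$. The Newton correction $X_{k+1}=X_k-\calR'(X_k)^{-1}\calR(X_k)$ then reduces, after expanding and cancelling the affine terms, to the Lyapunov equation
\[
(A-KX_k)^*X_{k+1}+X_{k+1}(A-KX_k)=-(Q+X_kKX_k).
\]
My aim would be to show that, from a suitable Hermitian starting point, this recursion produces a decreasing sequence of Hermitian matrices whose closed-loop matrices $A-KX_k$ all remain stable, so that its limit solves $\calR(X)=0$.

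First I would use stabilizability of $(A,K)$ to produce an initial Hermitian $X_0$ with $A-KX_0$ stable; since $K=K^*\succeq0$, this is the standard fact that a stabilizable pair admits a Hermitian stabilizing feedback. Setting $A_k:=A-KX_k$, once $A_k$ is stable the Lyapunov equation above has the unique Hermitian solution $X_{k+1}=\int_0^\infty e^{A_k^*t}\,(Q+X_kKX_k)\,e^{A_kt}\,dt$, and because $Q\succeq0$ and $K\succeq0$ the integrand is positive semi-definite, so each $X_{k+1}\succeq0$ and the iteration is well posed as long as stability is preserved.

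The heart of the argument, and the step I expect to be the main obstacle, is the simultaneous induction that $A_{k+1}$ stays stable and that the sequence decreases. The engine is the pair of identities obtained by substituting the Newton relation back into $\calR$, namely
\[
\calR(X_{k+1})=-(X_{k+1}-X_k)K(X_{k+1}-X_k)\preceq0
\]
together with the closed-loop relation
\[
A_{k+1}^*X_{k+1}+X_{k+1}A_{k+1}=-\big(Q+X_{k+1}KX_{k+1}+(X_{k+1}-X_k)K(X_{k+1}-X_k)\big)\preceq0.
\]
The first identity shows $\calR(X_k)\preceq0$ for $k\ge1$, and the integral representation of the increment $X_{k+1}-X_k$ then yields $X_{k+1}\preceq X_k$; the second identity, together with $X_{k+1}\succeq0$ and detectability of $(A,Q)$, keeps $A_{k+1}$ stable via the same eigenvector (PBH) argument used below. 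The resulting sequence is monotone decreasing and bounded below by $0$, hence converges to a Hermitian $X\succeq0$, and passing to the limit in the recursion gives $\calR(X)=0$.

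For uniqueness I would first show that any Hermitian $X\succeq0$ solving the CARE is stabilizing. Rearranging the equation gives $(A-KX)^*X+X(A-KX)=-(Q+XKX)\preceq0$; if $(A-KX)v=\lambda v$ with $\mathrm{Re}\,\lambda\ge0$, testing against $v$ yields $2\,\mathrm{Re}(\lambda)\,v^*Xv=-v^*(Q+XKX)v$, whose left side is $\ge0$ and right side $\le0$, forcing both to vanish; then $Qv=0$ and $KXv=0$, so $Av=\lambda v$ with $Qv=0$ and $\mathrm{Re}\,\lambda\ge0$, contradicting detectability of $(A,Q)$. Hence $A-KX$ is stable. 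Finally, if $X$ and $Y$ are two such solutions, subtracting their equations produces the Sylvester equation $(A-KX)^*(X-Y)+(X-Y)(A-KY)=0$; since both $A-KX$ and $A-KY$ are stable, the associated Sylvester operator has spectrum in the open left half-plane and is therefore invertible, so $X-Y=0$, which establishes uniqueness.
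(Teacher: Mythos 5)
The paper does not prove this theorem at all: it is stated as a known result and attributed to \cite{D.B.B.2012stabilizing}, so there is no internal proof to compare against, and your attempt must be judged on its own. Judged that way, it is the classical Kleinman--Newton proof and it is essentially correct. The two identities you isolate, $\calR(X_{k+1})=-(X_{k+1}-X_k)K(X_{k+1}-X_k)$ and $A_{k+1}^*X_{k+1}+X_{k+1}A_{k+1}=-\bigl(Q+X_{k+1}KX_{k+1}+(X_{k+1}-X_k)K(X_{k+1}-X_k)\bigr)$, both check out algebraically, and together with the integral representation of the Lyapunov solution they do carry the simultaneous induction: $X_{k+1}\succeq 0$, stability of $A_{k+1}$ by the PBH eigenvector test (the case $\mathrm{Re}\,\lambda>0$ forces $X_{k+1}v=0$, the case $\mathrm{Re}\,\lambda=0$ forces the right-hand side to vanish, and either way one gets $Av=\lambda v$ with $Qv=0$, contradicting detectability), and $X_{k+1}\preceq X_k$ for $k\ge 1$ -- note the monotonicity starts at $X_1$, not $X_0$, consistent with the chain $0\le X_*\le\cdots\le X_1$ that the paper later quotes from Kleinman. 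The uniqueness half -- every Hermitian positive semi-definite solution is stabilizing, after which the Sylvester operator $\Delta\mapsto(A-KX)^*\Delta+\Delta(A-KY)$ is invertible because its eigenvalues $\bar\lambda_i+\mu_j$ all have negative real parts -- is likewise the standard correct argument. Two points deserve explicit mention. First, your proof uses $K\succeq 0$ and $Q\succeq 0$ throughout (positivity of the Lyapunov integrand; deducing $Qv=0$ and $KXv=0$ from vanishing quadratic forms), whereas the theorem as transcribed in the paper only assumes $K=K^*$ and $Q=Q^*$; without semidefiniteness the statement is false, so you have silently (and correctly) restored hypotheses that the paper leaves implicit in its control setting, where $Q$ is positive semi-definite and $K=BR^{-1}B^*\succeq 0$. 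Second, the existence of a Hermitian $X_0$ with $A-KX_0$ stable is invoked as a ``standard fact''; it is true (a Bass-type construction, essentially what the paper itself implements in Step 1 of Algorithm \ref{alg:Newton-GADI}), but since this is the only place stabilizability of $(A,K)$ enters, a self-contained proof should spell that construction out.
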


Additionally, there are many works focusing on the numerical algorithms for the CARE \eqref{eq:CARE}, such as
the Schur method \cite{R.B.D.2008parallel},
the matrix sign function \cite{R.1987solving},
the structure-preserving doubling algorithm \cite{E.H.W.W.2004structure},
and Krylov subspace projection method \cite{B.S.2013numerical}-\cite{B.H.J.2016preconditioned}. Kleinman \cite{D.1968iterative}, Banks and Ito \cite{H.K.1991numerical} applied the Newton method, due to its quadratic convergence, to solve this equation.
Nevertheless, at each Newton iteration step, a Lyapunov equation needs to be solved to get next iteration solution.
Therefore, Navasca and Morris in \cite{K.C.2005solution}-\cite{K.C.2006iterative}
combined the Newton method with a modified alternating-direction implicit (ADI) method.
Benner and his coauthors used the variant of Newton-ADI algorithm to
solve large-scale Riccati equations \cite{P.H.J.2008parameter}-\cite{P.J.2010Galerkin}.
Moreover, Feitzinger et al. in \cite{F.T.E.2009inexact}-\cite{P.M.2016inexact} proposed
and analyzed the inexact Newton-ADI method. Recently, \cite{M.A.2021Hermitian} has proposed an iteration
 scheme and given its theoretical analysis. It should be noticed that this algorithm still needs to solve a Lyapunov equation twice at each iteration step. Based on this, we concentrate on solving the CARE \eqref{eq:CARE} by combing the Newton method
with the generalized alternating directions implicit (GADI).

 The remaining part of this paper is organized as follows.
In Section \ref{sec:Newon-GADI}, we propose two Newton-GADI methods, including the Newton-GADI
and inexact Newton-GADI algorithms. Moreover, we show the convergence analysis of the Newton-GADI.  A practical method is given to select the quasi-optimal parameter.
In Section \ref{sec:NE}, some numerical examples are devoted to showing the effectiveness of
the proposed algorithms.  We draw some conclusion and remarks in the last section.

Throughout this paper, let $\bbC^{n\times m}$ be the set of all $n\times m$ complex matrices. $I_n$ is the identity matrix of order $n$.
If $A\in \bbC^{n\times n}$, the symbols $A^{\ast}$, $A^{-1}$ and $\|A\|_{2}$ express the conjugate transpose,
the inverse, the spectral norm of $A$, respectively. The eigenvalue and singular value sets of $A$ are denoted as $\Lambda(A)=\{\lambda_{i}(A),~i=1, 2,\cdots, n\},
~~\Sigma(A)=\{\sigma_{i}(A),~i=1, 2,\cdots, n\},$ where $\lambda_{i}(A)$ and $\sigma_i(A)$ represent
the $i$-th component, arranged in non-increasing order.  $\rho(A)=\max\limits_{1\leq i\leq n}\{|\lambda_{i}(A)|\}$
represents the spectral radius of $A$.

\section{Newton-GADI algorithms}
\label{sec:Newon-GADI}

It is well known that one classical approach to solving the  CARE \eqref{eq:CARE} is to tackle its nonlinearity
with a Newton-type method. In this section, we apply the Newton method to transform this equation into a Lyapunov equation.  Then we use the GADI method to solve the Lyapunov equation. The whole framework is denoted by the Newton-GADI.
To save computational amount, the inexact Newton-GADI method is presented. Further, we give the convergence analysis of the Newton-GADI algorithm.

\subsection{Iteration schemes}

Now we introduce the usual scheme of the Newton method
for solving the CARE \eqref{eq:CARE}.

Define the mapping $\calR: \bbC^{n\times n}\rightarrow \bbC^{n\times n}$:
\begin{align*}
\calR(X)=A^*X+XA-XKX+Q,~X\in \bbC^{n\times n},
\end{align*}
where $A,~K,~Q$ are defined in \eqref{eq:CARE}. The first Fr$\acute{\mbox{e}}$chet derivative of $\calR$
at a matrix $X$ is a linear map
$\calR'_X: \bbR^{n\times n}\rightarrow \bbR^{n\times n}$ given by
\begin{align*}
\calR'_X{(E)}=(A-KX)^*E+E(A-KX).
\end{align*}
The Newton method for the CARE \eqref{eq:CARE} is
\begin{align}
X_{k+1}=X_{k}-(\calR'_{X_k})^{-1}\calR(X_k),~k=0,1,\cdots,
\label{eq:Newton-scheme}
\end{align}
given that the map $\calR'_{X_k}$ is invertible.
The Newton iteration scheme \eqref{eq:Newton-scheme} is equivalent to
\begin{align}
(A-KX_k)^*X_{k+1}+X_{k+1}(A-KX_k)+X_k K X_k+Q=0.
\label{eq:Newton-CARE}
\end{align}
The convergence property for the Newton scheme \eqref{eq:Newton-CARE} is illustrated by the following result.

\begin{thm}
\cite{D.1968iterative}
For the CARE \eqref{eq:CARE}, assume that $(A,K)$ is stabilizable and $(A,Q)$ is detectable,
starting with any Hermitian matrix $X_0$ such that $A-KX_0$ is stable,
the sequence of Hermitian matrices $\{X_{k}\}_{k=1}^{\infty}$
determined by \eqref{eq:Newton-CARE}
quadratic converges to the Hermitian
positive semi-definite solution $X_*$ of \eqref{eq:CARE}.
In other words, there exists a constant $\delta>0$ such that
\begin{align*}
\|X_k-X_*\|\leq \delta \|X_{k-1}-X_*\|^2,~~k=0,1,\cdots,
\end{align*}
where $\|\cdot\|$ is any given matrix norm.
Moreover, the iteration sequence  $\{X_{k}\}_{k=1}^{\infty}$ has a monotone convergence behavior, \textit{i.e.,}
\begin{align*}
0\leq X_*\leq \cdots \leq X_{k+1}\leq X_{k}\leq \cdots \leq X_1.
\end{align*}
\end{thm}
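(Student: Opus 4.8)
The plan is to reduce every assertion to the integral representation of the solution of a Lyapunov equation with a stable coefficient matrix, and to extract from the Newton scheme a few residual identities. Writing $A_k=A-KX_k$ for the closed-loop matrix and using the standing positivity $K=K^*\ge 0$, $Q=Q^*\ge 0$, I would first establish by direct substitution into \eqref{eq:Newton-CARE} the identity
\begin{align*}
\calR(X_{k+1})=-(X_{k+1}-X_k)K(X_{k+1}-X_k),
\end{align*}
together with the two difference equations
\begin{align*}
A_k^*(X_{k+1}-X_k)+(X_{k+1}-X_k)A_k &= -\calR(X_k),\\
A_k^*(X_{k+1}-X_*)+(X_{k+1}-X_*)A_k &= -(X_k-X_*)K(X_k-X_*),
\end{align*}
which are routine manipulations using $\calR(X_*)=0$ and $K^*=K$. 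Once these are in hand, monotonicity and the convergence rate both follow from the fact that, when $A_k$ is stable, the unique solution $Y$ of $A_k^*Y+YA_k=-C$ is $Y=\int_0^\infty e^{A_k^*t}\,C\,e^{A_kt}\,dt$, which is positive semi-definite whenever $C\ge 0$.

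The main obstacle, and the step I would treat most carefully, is the induction showing that each iterate is well defined and that $A_{k+1}$ remains stable. The base case is the hypothesis that $A_0=A-KX_0$ is stable. Assuming $A_k$ stable, \eqref{eq:Newton-CARE} is a Lyapunov equation with stable coefficient and thus has a unique Hermitian solution $X_{k+1}$; the second difference identity then gives $X_{k+1}-X_*\ge 0$, so $X_{k+1}\ge 0$. To show $A_{k+1}$ is stable I would rewrite \eqref{eq:Newton-CARE} as
\begin{align*}
A_{k+1}^*X_{k+1}+X_{k+1}A_{k+1}=-\big(Q+X_{k+1}KX_{k+1}+(X_{k+1}-X_k)K(X_{k+1}-X_k)\big)=:-W_{k+1},
\end{align*}
with $W_{k+1}\ge 0$, and run a PBH-type argument: if $A_{k+1}v=\lambda v$ with $v\ne 0$ and $\operatorname{Re}\lambda\ge 0$, then testing this equation against $v$ forces $v^*W_{k+1}v=0$, hence $Qv=0$ and $KX_{k+1}v=0$; the latter reduces $A_{k+1}v=\lambda v$ to $Av=\lambda v$, producing an unstable unobservable mode and contradicting the detectability of $(A,Q)$. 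This is precisely where both standing assumptions and the positivity of $K$ and $Q$ enter, and it is the crux of the whole argument.

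With stability secured for every $k$, monotonicity is immediate: for $k\ge 1$ the first identity gives $\calR(X_k)=-(X_k-X_{k-1})K(X_k-X_{k-1})\le 0$, so the first difference equation yields $X_{k+1}-X_k\le 0$, while the second difference equation gives $X_{k+1}\ge X_*$ for all $k$. Thus $\{X_k\}_{k\ge 1}$ is monotonically non-increasing and bounded below by $X_*$, hence converges to some Hermitian $\hat X\ge X_*\ge 0$. Letting $k\to\infty$ in $\calR(X_{k+1})=-(X_{k+1}-X_k)K(X_{k+1}-X_k)$, whose right-hand side tends to $0$, shows $\calR(\hat X)=0$; since $\hat X$ is Hermitian positive semi-definite, the uniqueness of the Hermitian positive semi-definite solution asserted in the existence theorem above forces $\hat X=X_*$, giving the chain $0\le X_*\le\cdots\le X_{k+1}\le X_k\le\cdots\le X_1$.

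Finally, for the quadratic rate I would apply the integral representation to the second difference identity,
\begin{align*}
X_{k+1}-X_*=\int_0^\infty e^{A_k^*t}\,(X_k-X_*)K(X_k-X_*)\,e^{A_kt}\,dt,
\end{align*}
and take norms to obtain $\|X_{k+1}-X_*\|\le\big(\int_0^\infty\|e^{A_kt}\|^2\,dt\big)\|K\|\,\|X_k-X_*\|^2$. Because $A_k\to A-KX_*$, which is stable, the integrals $\int_0^\infty\|e^{A_kt}\|^2\,dt$ are uniformly bounded in $k$; setting $\delta$ to be this uniform bound times $\|K\|$ yields $\|X_{k+1}-X_*\|\le\delta\|X_k-X_*\|^2$, the asserted quadratic convergence. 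The only mild technical point is this uniform bound on the Lyapunov integrals, which follows from continuous dependence of the Lyapunov solution on its stable coefficient together with the already-established convergence $X_k\to X_*$.
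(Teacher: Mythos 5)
Your proof is correct, but there is no internal proof to compare it against: the paper states this theorem as a quoted result, citing Kleinman's 1968 paper, and gives no argument of its own. Your reconstruction is precisely the classical Kleinman--Newton proof: the three identities $\calR(X_{k+1})=-(X_{k+1}-X_k)K(X_{k+1}-X_k)$ and the two difference Lyapunov equations, the inductive PBH-type step showing $A_{k+1}=A-KX_{k+1}$ stays stable (which, as you say, is the crux, and is where stabilizability enters only through the base case while detectability carries the induction), monotone convergence plus uniqueness of the Hermitian positive semi-definite solution to identify the limit, and the integral representation for the quadratic bound. Two points are worth making explicit. First, you rightly add the hypotheses $K\geq 0$, $Q\geq 0$, which the bare statement of \eqref{eq:CARE} does not contain ($K=K^*$, $Q=Q^*$ only); they are supplied by the paper's control setting ($K=BR^{-1}B^*$ with $R>0$, $Q$ positive semi-definite) and the theorem is false without them, so your proof correctly identifies an implicit assumption the paper leaves unstated. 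Second, the stability of $A-KX_*$, which you invoke to get the uniform bound on $\int_0^\infty\|e^{A_kt}\|^2\,dt$, does not follow merely from $A_k$ stable and $A_k\to A-KX_*$, since a limit of stable matrices can acquire imaginary-axis eigenvalues; it requires rerunning your PBH/detectability argument on $(A-KX_*)^*X_*+X_*(A-KX_*)=-(Q+X_*KX_*)$, i.e., the standard fact that under detectability every Hermitian positive semi-definite solution is stabilizing. With that one step spelled out (and the harmless appeal to norm equivalence so that $\delta$ works for an arbitrary matrix norm), your argument is complete and is exactly the proof the paper's citation points to.
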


Denote $A_k=KX_k-A$ and $F(X_k)=X_k K X_k+Q$, where the mapping $F: \bbC^{n\times n}\rightarrow \bbC^{n\times n}$.
Actually, due to $A-KX_k$ is stable, then $A_k$ is stable,
\textit{i.e.,} $\mbox{Re}(\lambda_i(A_k))>0$.
The iteration scheme can be written as
\begin{align}
A_k^*X_{k+1}+X_{k+1}A_k=F(X_k).
\label{eq:Newton-Lyapunov}
\end{align}
It is required to solve the Lyapunov equation \eqref{eq:Newton-Lyapunov}
in each Newton iteration step.
We recall an existence and uniqueness
theorem for this equation.
\begin{thm}
The Lyapunov equation
\begin{align*}
\hat{A}^*Y+Y\hat{A}+Z=0
\end{align*}
has a uniquely Hermitian positive semi-definite solution $Y\in\bbC^{n\times n}$ if and only if $\hat{A}\in\bbC^{n\times n}$ is stable, for any Hermitian matrix $Z\in\bbC^{n\times n}$.

\end{thm}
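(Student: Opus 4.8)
The plan is to reduce the statement to two facts about the Lyapunov operator $\mathcal{L}(Y):=\hat A^*Y+Y\hat A$, namely its invertibility and its order-preserving integral representation; the equation then reads $\mathcal{L}(Y)=-Z$. First I would vectorize. Using $\mathrm{vec}(PYR)=(R^T\otimes P)\,\mathrm{vec}(Y)$ one obtains $\mathrm{vec}(\mathcal{L}(Y))=M\,\mathrm{vec}(Y)$ with $M=\hat A^T\otimes I_n+I_n\otimes \hat A^*$. Since $M$ is the Kronecker sum of $\hat A^T$ and $\hat A^*$, its spectrum is $\{\lambda_i+\overline{\lambda_j}:1\le i,j\le n\}$, where $\Lambda(\hat A)=\{\lambda_1,\dots,\lambda_n\}$. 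Hence $\mathcal{L}$ is a bijection on $\bbC^{n\times n}$, and the equation has a unique solution for every $Z$, precisely when $\lambda_i+\overline{\lambda_j}\neq 0$ for all $i,j$.

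For the sufficiency direction I would argue as follows. If $\hat A$ is stable (its eigenvalues lie in the open right half-plane, as for $A_k$ above), then $\mathrm{Re}(\lambda_i+\overline{\lambda_j})=\mathrm{Re}\,\lambda_i+\mathrm{Re}\,\lambda_j>0$, so no eigenvalue of $M$ vanishes and existence and uniqueness follow from the vectorized form. To exhibit the solution and read off its definiteness I would set
\[
Y=\int_{0}^{\infty}e^{-\hat A^*t}\,(-Z)\,e^{-\hat A t}\,dt,
\]
which converges because $\|e^{-\hat A t}\|_2$ decays exponentially under right half-plane stability. Differentiating $t\mapsto e^{-\hat A^*t}(-Z)e^{-\hat A t}$ and applying the fundamental theorem of calculus gives $\mathcal{L}(Y)=-Z$, so this $Y$ is the unique solution. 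Moreover the integrand is Hermitian, and it is positive semi-definite whenever $-Z\succeq 0$ (the case arising in \eqref{eq:Newton-Lyapunov}, where $-Z=F(X_k)\succeq 0$); hence $Y$ is Hermitian positive semi-definite.

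The necessity direction is where I expect the main difficulty. Unique solvability for all $Z$ forces $M$ nonsingular, i.e.\ $\lambda_i+\overline{\lambda_j}\neq 0$; but taking $i=j$ only yields $\mathrm{Re}\,\lambda_i\neq 0$, which is weaker than stability. To promote this to the sign condition I would use the quadratic-form test: for an eigenpair $\hat A v=\lambda v$ with $v\neq 0$, pairing $\mathcal{L}(Y)=-Z$ with $v$ gives
\[
2\,\mathrm{Re}(\lambda)\,(v^*Yv)=-\,v^*Zv,
\]
which determines the sign of $\mathrm{Re}(\lambda)$ from the definiteness of $Y$ and $Z$. The subtle point is that with merely semi-definite (rather than definite) data this identity can degenerate, e.g.\ when $v^*Yv=0$, so the clean equivalence genuinely requires either the strictly definite regime or an auxiliary detectability/observability hypothesis on the pair $(\hat A,Z)$. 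I would resolve this either by strengthening to $-Z\succ 0$, in which case $v^*Yv>0$ and the eigenvector test closes the argument immediately, or by invoking the integral representation together with controllability of a factorization $-Z=C^*C$ to show $Y\succ 0$ and thereby recover stability.
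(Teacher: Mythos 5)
The paper contains no proof of this theorem to compare against: it is recalled as a classical existence--uniqueness fact immediately after the Newton--Lyapunov step \eqref{eq:Newton-Lyapunov} and used as a black box. Judged on its own merits, your argument is the standard one and it is sound: the Kronecker-sum vectorization $M=\hat A^T\otimes I_n+I_n\otimes\hat A^*$ with spectrum $\{\lambda_i+\overline{\lambda_j}\}$ settles unique solvability; the integral representation $Y=\int_0^\infty e^{-\hat A^*t}(-Z)e^{-\hat A t}\,dt$, with the sign convention correctly matched to the paper's usage (where ``stable'' for $A_k$ means $\mathrm{Re}\,\lambda_i(A_k)>0$), verifies existence via the fundamental theorem of calculus; and the pairing $2\,\mathrm{Re}(\lambda)\,v^*Yv=-v^*Zv$ drives the converse. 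You are also right that the theorem as printed is sloppy: for an arbitrary Hermitian $Z$ the unique solution is Hermitian but cannot be positive semi-definite for every $Z$ (replacing $Z$ by $-Z$ flips the sign of $Y$), so the PSD assertion is only meaningful when $-Z\succeq 0$, which is exactly the regime of \eqref{eq:Newton-Lyapunov}, where $-Z=X_kKX_k+Q\succeq 0$. Catching that defect in the statement is a genuine contribution of your write-up.

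One refinement on your necessity discussion: you do not need to strengthen the hypothesis to $-Z\succ 0$ or import a detectability/observability assumption. The theorem quantifies over \emph{all} Hermitian $Z$, so in the ``only if'' direction you are free to instantiate $Z=-I_n$; the hypothesized unique Hermitian PSD solution $Y$ then satisfies $2\,\mathrm{Re}(\lambda)\,v^*Yv=v^*v>0$ for every eigenpair $\hat Av=\lambda v$, which forces $v^*Yv>0$ and hence $\mathrm{Re}\,\lambda>0$ for every eigenvalue --- stability in the paper's convention. The degeneracy you worry about ($v^*Yv=0$ with merely semi-definite data) is real, but it obstructs only a fixed-$Z$ version of the statement; under the universal quantifier it evaporates. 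So the one genuine flaw is in the theorem's own phrasing (its ``if'' direction is false as literally stated, for precisely the reason you identified), not in your proof strategy.
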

We apply the GADI framework in \cite{K.X.J.2021general} to solve the Lyapunov equation \eqref{eq:Newton-Lyapunov}. The GADI scheme is
\begin{align}
\begin{cases}
(\alpha_k I+ A_k^*) X_{k+1,\ell+\frac{1}{2}}=X_{k+1,\ell}(\alpha_{k} I-A_k)+F(X_k),\\
X_{k+1,\ell+1}(\alpha_{k} I+ A_k) =X_{k+1,\ell}[A_k-(1-\omega_k)\alpha_{k} I]
+(2-\omega_{k})\alpha_{k} X_{k+1,\ell+\frac{1}{2}},
\end{cases}
\label{eq:Newton-GADI}
\end{align}
where $\ell=0,1,\cdots$, $X_{k+1,0}=X_{k}$, $\alpha_{k} >0$ and $\omega_{k}\in [0,2)$.

Combining the Newton scheme with the GADI scheme, we have the Newton-GADI algorithm, summarized in Algorithm \ref{alg:Newton-GADI} below.

\begin{algorithm}[H]
\caption{(Newton-GADI Algorithm)}
\label{alg:Newton-GADI}
~~~Given the matrices $A,~K,~Q$, the integer numbers $k_{\max}>1$ and $\ell_{\max}>1$,
the outer and inter iteration tolerance $\varepsilon_{out}$
and $\varepsilon_{inn}$, respectively.

~~~Step 1. Compute $\beta=1+\|A\|_{\infty},~~B=A+\beta I$
and the initial matrix $X_0=X^{-1}$
by solving Lyapunov equation $B^*X+XB-2Q=0$.

~~~Step 2. Set $X_{k+1,0}=X_{k}$
and compute the matrices
\begin{align*}
A_k=KX_k-A,~~F(X_k)=X_k K X_k+Q.
\end{align*}

~~~Step 3. Solve the GADI iteration scheme \eqref{eq:Newton-GADI} to obtain $X_{k+1}=X_{k+1,\ell_k}$ such that
\begin{align*}
\|A^*X_{k+1,\ell_k}+X_{k+1,\ell_k}A-X_{k+1,\ell_k}KX_{k+1,\ell_k}+Q\|< \varepsilon_{inn}
~~\mbox{or}~~
\ell_k>\ell_{\max}.
\end{align*}

~~~Step 4. Compute the normalized residual
\begin{align*}
\mbox{NRes}(X_{k+1})
=\frac{\|A^*X_{k+1}+X_{k+1}A-X_{k+1}KX_{k+1}+Q\|_2}
{\|A^*X_{k+1}\|_2+\|X_{k+1}A\|_2+\|X_{k+1}KX_{k+1}\|_2+\|Q\|_2}.
\end{align*}

~~~Step 5. If $\mbox{NRes}(X_{k+1})< \epsilon_{out}$ or $k>k_{max}$,
then the approximate solution of the CARE \eqref{eq:CARE} is $\tilde{X}=X_{k+1}$;
else set $k=k+1$ and return to Step 2.
\end{algorithm}

\begin{remark}
From Algorithm \ref{alg:Newton-GADI}, it can be seen that when $\omega_{k}=0$, the GADI method naturally reduces to
the ADI method in \cite{E.1988iterative}. We adopt the method given in \cite{S.2011matrix} to select the  desired initial matrix $X_0$,
see Step 1 in Algorithm \ref{alg:Newton-GADI}.

\end{remark}

For large-scale Lyapunov equations at each Newton step,
it is important to control the accuracy of the solution to gain efficiency,
and wish to keep the overall fast convergence property of the Newton method.
The inexact Newton method proposed in \cite{F.T.E.2009inexact}
has shown a rigorous guideline for the termination of the inner iteration while the fast local rate of convergence is retained.
The Lyapunov equation can be approximately solved such that
\begin{align*}
&\|(A-KX_k)^*X_{k+1}+X_{k+1}(A-KX_k)+X_k K X_k+Q\|
\leq \eta_k\|\calR(X_k)\|\\
&=\eta_k\|A^*X_k+X_kA-X_kKX_k+Q\|.
\end{align*}
Formally, the iteration scheme is determined by solving
\begin{align}
(A-KX_k)^*X_{k+1}+X_{k+1}(A-KX_k)+X_k K X_k+Q=R_k.
\label{eq:Inexact-Newton-CARE}
\end{align}
The following convergence property for the inexact Newton method is referenced in \cite{F.T.E.2009inexact}.

\begin{thm}
\cite{F.T.E.2009inexact}
For the CARE \eqref{eq:CARE}, assume that $(A,K)$ is stabilizable and $(A,Q)$ is detectable,
and $X_*\in\bbC^{n\times n}$ is the Hermitian positive semi-definite solution. If there exist $\tilde{\delta}>0$ and $\tilde{\eta}>0$
such that for an initial matrix $X_0\in\bbC^{n\times n}$ meeting
$\|X_0-X_*\|\leq \tilde{\delta}$, then the iteration sequence $X_k$ generated by \eqref{eq:Inexact-Newton-CARE}
converges to $X_*$ if the residual $R_k$ satisfies
\begin{align*}
\|R_k\|\leq \eta_k \|A^*X_k+X_kA-X_kKX_k+Q\|.
\end{align*}
The convergence rate is linear if $\eta_k\in (0,\tilde{\eta}]$.
It is super-linear if $\eta_k\rightarrow 0$ and is quadratic if
$$ \eta_k\leq K_{\eta} \|A^*X_k+X_kA-X_kKX_k+Q\|$$
for some $K_{\eta}>0$.
\end{thm}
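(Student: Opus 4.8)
The plan is to recognize \eqref{eq:Inexact-Newton-CARE} as a standard inexact Newton iteration for the operator equation $\calR(X)=0$ and then run the classical error-recursion argument in the matrix setting. First I would rewrite \eqref{eq:Inexact-Newton-CARE} as $\calR'_{X_k}(X_{k+1}-X_k)=-\calR(X_k)+R_k$, so that $R_k$ is precisely the residual of the linearized step and the hypothesis $\|R_k\|\le\eta_k\|\calR(X_k)\|$ is the usual forcing-term condition. The essential structural input is that at the stabilizing solution $X_*$ the derivative $\calR'_{X_*}(E)=(A-KX_*)^*E+E(A-KX_*)$ is a bijection: since the stabilizing solution makes $A-KX_*$ stable, this is exactly the Lyapunov invertibility guaranteed by the uniqueness theorem stated above. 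Because $X\mapsto\calR'_X$ is affine in $X$, hence continuous, there exist a radius $\tilde{\delta}>0$ and a constant $\beta$ with $\|(\calR'_X)^{-1}\|\le\beta$ for every $X$ in the ball $\|X-X_*\|\le\tilde{\delta}$.

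Next I would exploit that $\calR$ is quadratic, so its second-order Taylor expansion is exact. A direct computation yields the identity
\begin{align*}
\calR(X_*)-\calR(X_k)-\calR'_{X_k}(X_*-X_k)=-(X_*-X_k)K(X_*-X_k).
\end{align*}
Writing $e_k=X_k-X_*$ and using $\calR(X_*)=0$ together with the inexact step, this collapses to the clean error recursion $\calR'_{X_k}(e_{k+1})=-e_kKe_k+R_k$. Inverting the derivative on the ball and taking norms gives
\begin{align*}
\|e_{k+1}\|\le\beta\bigl(\|K\|\,\|e_k\|^2+\|R_k\|\bigr).
\end{align*}
To control the residual I would bound $\|\calR(X_k)\|=\|\calR(X_k)-\calR(X_*)\|\le M\|e_k\|$, where $M$ majorizes $\|\calR'\|$ along the segment from $X_k$ to $X_*$; combined with the forcing condition this produces the master inequality
\begin{align*}
\|e_{k+1}\|\le\beta M\eta_k\|e_k\|+\beta\|K\|\,\|e_k\|^2.
\end{align*}

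The three rate claims then follow by reading off this inequality. Choosing $\tilde{\delta}$ and $\tilde{\eta}$ small enough that $\theta:=\beta M\tilde{\eta}+\beta\|K\|\tilde{\delta}<1$ forces $\|e_{k+1}\|\le\theta\|e_k\|$ whenever $\eta_k\le\tilde{\eta}$ and $\|e_k\|\le\tilde{\delta}$, and an induction then shows the ball is invariant, giving linear convergence. If $\eta_k\to0$, the ratio $\|e_{k+1}\|/\|e_k\|\le\beta M\eta_k+\beta\|K\|\,\|e_k\|$ tends to zero, so convergence is superlinear; and if $\eta_k\le K_\eta\|\calR(X_k)\|\le K_\eta M\|e_k\|$, substitution yields $\|e_{k+1}\|\le(\beta M^2K_\eta+\beta\|K\|)\|e_k\|^2$, i.e. quadratic convergence.

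I expect the main obstacle to lie in the first paragraph rather than in the algebra: one must establish the uniform bound $\beta$ on $\|(\calR'_X)^{-1}\|$ throughout a fixed ball and then secure invariance of that ball under the iteration, since the forcing term and the quadratic error feed back on one another. This is the delicate joint choice of $\tilde{\delta}$ and $\tilde{\eta}$ guaranteeing $\theta<1$; once that neighborhood is fixed, the remaining estimates are routine.
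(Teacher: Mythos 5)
The first thing to say is that the paper contains no proof of this theorem: it is stated as an imported result, quoted with the citation \cite{F.T.E.2009inexact}, so your proposal can only be judged on its own merits --- and on those it stands up. What you have written is the classical Dembo--Eisenstat--Steihaug local inexact-Newton analysis specialized to the Riccati operator. The rewriting of \eqref{eq:Inexact-Newton-CARE} as $\calR'_{X_k}(X_{k+1}-X_k)=-\calR(X_k)+R_k$ is exact; the second-order Taylor identity with remainder $-(X_*-X_k)K(X_*-X_k)$ is exact because $\calR$ is quadratic (the algebra checks out); the error recursion $\calR'_{X_k}(e_{k+1})=-e_kKe_k+R_k$ follows; and since $A-KX_*$ is stable under the stabilizability/detectability hypotheses, the Lyapunov operator $\calR'_{X_*}$ is invertible, invertibility is an open condition in finite dimensions, so your uniform bound $\beta$ on a ball around $X_*$ is legitimate and the linear, superlinear, and quadratic rates read off from the master inequality exactly as you claim, with the induction securing ball invariance --- the delicate joint choice of $\tilde{\delta}$ and $\tilde{\eta}$ that you correctly flag. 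One caveat: the formula $\calR'_X(E)=(A-KX)^*E+E(A-KX)$ used in the paper is the Fr\'echet derivative only on the Hermitian cone; in general $\calR'_X(E)=A^*E+EA-XKE-EKX$, for which your Taylor identity holds verbatim, so you should either remark that the iterates stay Hermitian whenever $X_0$ and the residuals $R_k$ are Hermitian, or run the estimates with the general formula. It is also worth knowing how your route differs from the cited original: Feitzinger, Hylla, and Sachs work in the Kleinman--Newton setting and must impose structural conditions on $R_k$ (beyond the norm bound) ensuring that each closed-loop matrix $A-KX_k$ remains stable, so that every Lyapunov step is well posed, and they track monotonicity of the iterates; your purely local argument sidesteps all of that because invertibility of $\calR'_{X_k}$ for $X_k$ near $X_*$ is all the recursion needs --- which is why your proof is shorter, but correspondingly delivers only the local statement, which is in fact all the theorem as quoted asserts.
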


As a consequence,
the inexact Newton-GADI method
is summarized in Algorithm \ref{alg:Inexact-Newton-GADI}. Obviously, this algorithm consists of approximating
the solution in the $k$-th Newton step for
a certain tolerance $\eta_k>0$. It decreases when $k$ increases,
\textit{e.g.}, set $\eta_k=1/(k^3+1)$ (see Section \ref{sec:NE} for details).

\begin{algorithm}[H]
\caption{ (Inexact Newton-GADI Algorithm)}
\label{alg:Inexact-Newton-GADI}
~~~Given the matrices $A,~K,~Q$, the integer numbers $k_{\max}>1$ and $\ell_{\max}>1$,
a sequence of positive numbers $\{\eta_k\}$ such that $\eta_{k+1}<\eta_k$ for all $k$,
the outer iteration tolerance $\varepsilon_{out}$.

~~~Step 1. Compute $\beta=1+\|A\|_{\infty},~~B=-(A+\beta I)$
and the initial matrix $X_0=X^{-1}$
by solving the Lyapunov equation $B^*X+XB-2Q=0$.

~~~Step 2. Set $X_{k+1,0}=X_{k}$
and compute the matrices
\begin{align*}
A_k=KX_k-A,~~F(X_k)=X_k K X_k+Q.
\end{align*}

~~~Step 3. Solve the GADI iteration scheme \eqref{eq:Newton-GADI} to obtain $X_{k+1}=X_{k+1,\ell_k}$
such that
\begin{align*}
\|R_k\|\leq \eta_k \|A^*X_k+X_kA-X_kKX_k+Q\|
~~\mbox{or}~~
\ell_k>\ell_{\max}.
\end{align*}


~~~Step 4. Compute the normalized residual
\begin{align*}
\mbox{NRes}(X_{k+1})
=\frac{\|A^*X_{k+1}+X_{k+1}A-X_{k+1}KX_{k+1}+Q\|_2}
{\|A^*X_{k+1}\|_2+\|X_{k+1}A\|_2+\|X_{k+1}KX_{k+1}\|_2+\|Q\|_2}.
\end{align*}

~~~Step 5. If $\mbox{NRes}(X_{k+1})< \epsilon_{out}$ or $k>k_{max}$,
then the approximate solution of the CARE \eqref{eq:CARE} is $\tilde{X}=X_{k+1}$;
else set $k=k+1$ and return to Step 2.
\end{algorithm}

\subsection{Convergence analysis}

In this subsection, we prove that the Newton-GADI scheme is convergent. The convergence analysis of the Inexact Newton-GADI scheme is similar and hence is omitted. We recall the definition and property of Kronecker product.

For $A_{1}=(a_{ij})\in\bbC^{n_A \times m_A}$
and $B_{1}\in\bbC^{n_B \times m_B}$,
the Kronecker product is defined as
\begin{align*}
A_{1}\otimes B_{1}=\begin{pmatrix}
a_{11}B_{1} & a_{12}B_{1} & \cdots & a_{1m_A}B_{1}\\
a_{21}B_{1} & a_{22}B_{1} & \cdots & a_{2m_A}B_{1}\\
\vdots &  \vdots  &         & \vdots\\
a_{n_A 1}B_{1} & a_{n_A2}B_{1} & \cdots & a_{n_Am_A}B_{1}
\end{pmatrix}
\in \bbC^{(n_An_B)\times (m_Am_B)}.
\end{align*}
The vectorization operator (abbreviated as vec): $\bbC^{m\times n}\rightarrow \bbC^{mn}$ is
\begin{align*}
\mbox{vec}(X_{1})=(x^{T}_1,x^{T}_2,\cdots,x^{T}_n)^{T}\in\bbC^{mn},
~~\mbox{with}~~X_{1}=(x_1,x_2,\cdots,x_n)\in\bbC^{m\times n}.
\end{align*}
A brief review of some properties related to Kronecker products is required.

\begin{proposition}
For $A_{1},~B_{1},~C,~D,~X_{1}$ with appropriate dimensions, we have

(i) $(A_{1}\otimes B_{1})(C\otimes D)=(A_{1}C)\otimes(B_{1}D)$;

(ii) $(A_{1}\otimes B_{1})^{-1}=A_{1}^{-1}\otimes B_{1}^{-1}$;

(iii) $\Lambda(A_{1}\otimes B_{1})=\{\lambda\mu:~\lambda\in\Lambda(A_{1}) ~\mbox{and}~
\mu\in \Lambda(B_{1})\}$;

(iv) $(A_{1}\otimes B_{1})x=\mbox{vec}(B_{1}^TX_{1}A_{1})$,~$\mbox{vec}(X_{1})=x$.
\end{proposition}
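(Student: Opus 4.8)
The plan is to make (i) the keystone and deduce (ii) and (iii) from it, then handle (iv) by a direct blockwise computation against the definition of $\mbox{vec}$. I would first prove (i) by comparing the two sides block by block. Writing $A_1=(a_{ij})$ and $C=(c_{jk})$, the $(i,k)$ block of $(A_1\otimes B_1)(C\otimes D)$ is the block inner product $\sum_j (a_{ij}B_1)(c_{jk}D)=\big(\sum_j a_{ij}c_{jk}\big)B_1D=(A_1C)_{ik}\,B_1D$, which is exactly the $(i,k)$ block of $(A_1C)\otimes(B_1D)$; since all blocks agree, (i) holds. This step is routine and I expect no difficulty.

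Part (ii) follows at once from (i): assuming $A_1$ and $B_1$ are square and nonsingular, taking $C=A_1^{-1}$ and $D=B_1^{-1}$ in (i) gives $(A_1\otimes B_1)(A_1^{-1}\otimes B_1^{-1})=(A_1A_1^{-1})\otimes(B_1B_1^{-1})=I\otimes I$, and likewise for the reversed product, so $A_1^{-1}\otimes B_1^{-1}$ is the two-sided inverse. For (iii) I would argue in two stages. For the inclusion $\supseteq$, if $A_1u=\lambda u$ and $B_1v=\mu v$ with $u,v\neq 0$, then (i) yields $(A_1\otimes B_1)(u\otimes v)=(A_1u)\otimes(B_1v)=\lambda\mu\,(u\otimes v)$ with $u\otimes v\neq 0$, so each product $\lambda\mu$ lies in $\Lambda(A_1\otimes B_1)$. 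To show these exhaust the spectrum with the right multiplicities, I would use Schur triangularization $A_1=U_1T_1U_1^*$ and $B_1=U_2T_2U_2^*$ with $U_i$ unitary and $T_i$ upper triangular; by (i) the matrix $U_1\otimes U_2$ is unitary [$(U_1\otimes U_2)(U_1\otimes U_2)^*=(U_1U_1^*)\otimes(U_2U_2^*)=I\otimes I$] and $A_1\otimes B_1=(U_1\otimes U_2)(T_1\otimes T_2)(U_1\otimes U_2)^*$. As $T_1\otimes T_2$ is upper triangular with diagonal entries $(T_1)_{ii}(T_2)_{jj}$, its spectrum, and hence that of $A_1\otimes B_1$, is precisely the set of pairwise products.

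The substantive part is (iv), which I would establish directly by matching the two sides column by column using the definition of $\mbox{vec}$. Partitioning $(A_1\otimes B_1)x$ into subvectors according to the block rows of $A_1\otimes B_1$, the $i$-th subvector equals $\sum_j (A_1)_{ij}\,B_1\,(X_1)_{\cdot j}$, where $(X_1)_{\cdot j}$ denotes the $j$-th column of $X_1$. The remaining task is to recognize that, as $i$ ranges over the block rows, these subvectors are exactly the successive columns of the matrix on the right-hand side of (iv), so that stacking them reproduces $\mbox{vec}$ of that matrix. I expect this column-identification to be the crux of the proposition, and the only genuine obstacle is the transpose bookkeeping: because the $\mbox{vec}$ operator fixed just above the proposition stacks columns, one must track carefully how the $i$-th row of $A_1$ and the left factor $B_1$ combine inside each subvector, so that the reassembled vector matches the right-hand side of (iv) exactly. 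Once this correspondence is verified the identity follows, and the remaining manipulations are mechanical.
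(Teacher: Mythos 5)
Your arguments for (i)--(iii) are correct and complete: the blockwise verification of (i), the two-sided-inverse deduction for (ii), and the two-stage argument for (iii) (eigenvectors of the form $u\otimes v$ for one inclusion, then Schur triangularization of both factors together with (i), so that $T_1\otimes T_2$ is upper triangular with diagonal entries $(T_1)_{ii}(T_2)_{jj}$, for the reverse) are all sound. For comparison: the paper offers no proof of this proposition at all --- it is stated as background review --- so the only thing to check your work against is how the paper actually uses these facts later.

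The genuine gap is exactly the step you postponed in (iv), and it does not ``follow once verified'': your own computation refutes the identity as printed. You correctly find that the $i$-th subvector of $(A_1\otimes B_1)x$ is $\sum_j (A_1)_{ij}B_1(X_1)_{\cdot j}=B_1X_1\bigl((A_1)_{i\cdot}\bigr)^T$, i.e.\ the $i$-th column of $B_1X_1A_1^T$. By contrast, the $i$-th column of the printed right-hand side $B_1^TX_1A_1$ is $B_1^TX_1(A_1)_{\cdot i}$, built from the $i$-th \emph{column} of $A_1$ and from $B_1^T$. These agree only in special cases (e.g.\ symmetric factors); in general (iv) as stated is false. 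Concretely, with
\begin{align*}
A_1=\begin{pmatrix}0&1\\0&0\end{pmatrix},\qquad B_1=X_1=I_2,
\end{align*}
one has $(A_1\otimes I_2)\,\mbox{vec}(I_2)=(0,1,0,0)^T=\mbox{vec}(A_1^T)$, while $\mbox{vec}(B_1^TX_1A_1)=\mbox{vec}(A_1)=(0,0,1,0)^T$. So the correct conclusion of your column-identification is the standard identity $(A_1\otimes B_1)\,\mbox{vec}(X_1)=\mbox{vec}(B_1X_1A_1^T)$, equivalently $\mbox{vec}(B_1^TX_1A_1)=(A_1^T\otimes B_1^T)x$, and (iv) in the paper should be read as a misprint of this; indeed the paper's own vectorization steps in the convergence proof, e.g.\ $\mbox{vec}(XA_k)=(A_k^T\otimes I)\,\mbox{vec}(X)$ and $\mbox{vec}(A_k^*X)=(I\otimes A_k^*)\,\mbox{vec}(X)$, conform to the corrected form rather than to (iv) as printed. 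Your write-up should either prove the corrected identity and flag the misprint explicitly, or it fails at precisely the final matching step you deferred.
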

Thereby, we have the following convergence conclusion for the Newton-GADI method.

\begin{thm}
\label{thm:GADIconver}
For the CARE \eqref{eq:CARE}, assume that $(A,K)$ is stabilizable and $(A,Q)$ is detectable,
the parameters $\alpha_{k}>0$ and $0\leq \omega_{k}<2$.
Then, for any $k=0,1,\cdots$,
the iteration sequence $\{X_{k+1,\ell}\}^{\infty}_{\ell=0}$
defined by \eqref{eq:Newton-GADI} converges to $X_{k+1}$.
\end{thm}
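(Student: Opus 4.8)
The plan is to exhibit the exact solution $X_{k+1}$ of the Lyapunov equation \eqref{eq:Newton-Lyapunov} as the unique fixed point of the GADI map \eqref{eq:Newton-GADI}, and then to show that the resulting error-propagation matrix has spectral radius strictly below one. I fix the Newton index $k$ throughout and abbreviate $A_k,\alpha_k,\omega_k,F(X_k)$; since $A-KX_k$ is stable, every eigenvalue $\lambda$ of $A_k$ has $\mathrm{Re}(\lambda)>0$, so with $\alpha_k>0$ the matrices $\alpha_k I+A_k$ and $\alpha_k I+A_k^{*}$ are invertible and the preceding existence theorem guarantees a unique solution $X_{k+1}$. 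First I would check the fixed-point property: inserting $X_{k+1,\ell}=X_{k+1,\ell+\frac12}=X_{k+1}$ into \eqref{eq:Newton-GADI} and using $A_k^{*}X_{k+1}+X_{k+1}A_k=F(X_k)$ reproduces $X_{k+1}$. Subtracting these identities from \eqref{eq:Newton-GADI} yields a homogeneous recursion for $E_\ell:=X_{k+1,\ell}-X_{k+1}$, namely $(\alpha_k I+A_k^{*})E_{\ell+\frac12}=E_\ell(\alpha_k I-A_k)$ followed by $E_{\ell+1}(\alpha_k I+A_k)=E_\ell[A_k-(1-\omega_k)\alpha_k I]+(2-\omega_k)\alpha_k E_{\ell+\frac12}$. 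Eliminating $E_{\ell+\frac12}$ and vectorizing by means of the Proposition gives $\mathrm{vec}(E_{\ell+1})=\mathcal{M}\,\mathrm{vec}(E_\ell)$ with
$$\mathcal{M}=\big(G^{T}\otimes I\big)+(2-\omega_k)\alpha_k\,\big[(\alpha_k I+A_k^{T})^{-1}(\alpha_k I-A_k^{T})\big]\otimes(\alpha_k I+A_k^{*})^{-1},$$
where $G=(A_k-(1-\omega_k)\alpha_k I)(\alpha_k I+A_k)^{-1}$, so convergence is equivalent to $\rho(\mathcal{M})<1$.

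Next I would compute the spectrum of $\mathcal{M}$. Both first Kronecker factors are rational functions of $A_k^{T}$, hence commute and are simultaneously upper-triangularized by a unitary $U$ (Schur), while the second factors $I$ and $(\alpha_k I+A_k^{*})^{-1}$ are triangularized by a unitary $W$; then $U\otimes W$ upper-triangularizes $\mathcal{M}$, and its eigenvalues are read off the diagonal. Writing $\Lambda(A_k)=\{\lambda_j\}$, so that $\Lambda(A_k^{T})=\{\lambda_j\}$ and $\Lambda(A_k^{*})=\{\bar\mu_i\}$ with $\mu_i\in\Lambda(A_k)$, each eigenvalue takes the scalar form
$$\nu_{ij}=\frac{\alpha_k^{2}+(\omega_k-1)\alpha_k(\lambda_j+\bar\mu_i)+\lambda_j\bar\mu_i}{(\alpha_k+\lambda_j)(\alpha_k+\bar\mu_i)}.$$
The decisive observation is that $\nu_{ij}$ is affine in $\omega_k$, equal to $1$ at $\omega_k=2$ and to $\tfrac{(\alpha_k-\lambda_j)(\alpha_k-\bar\mu_i)}{(\alpha_k+\lambda_j)(\alpha_k+\bar\mu_i)}$ at $\omega_k=0$; a short manipulation recasts it as the convex combination
$$\nu_{ij}=\Big(1-\tfrac{\omega_k}{2}\Big)\,\frac{(\alpha_k-\lambda_j)(\alpha_k-\bar\mu_i)}{(\alpha_k+\lambda_j)(\alpha_k+\bar\mu_i)}+\tfrac{\omega_k}{2}.$$

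Finally I would close the estimate. Because $\mathrm{Re}(\lambda_j)>0$ and $\mathrm{Re}(\bar\mu_i)=\mathrm{Re}(\mu_i)>0$ with $\alpha_k>0$, each Cayley factor satisfies $|(\alpha_k-\lambda_j)/(\alpha_k+\lambda_j)|<1$ and likewise for $\bar\mu_i$, so the first point of the combination lies strictly inside the open unit disk while the second equals $1$ on its boundary. Since $\tfrac{\omega_k}{2}\in[0,1)$, the triangle inequality gives $|\nu_{ij}|\le(1-\tfrac{\omega_k}{2})\,|\,\cdot\,|+\tfrac{\omega_k}{2}<(1-\tfrac{\omega_k}{2})+\tfrac{\omega_k}{2}=1$ for every pair $i,j$. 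Hence $\rho(\mathcal{M})<1$, so $E_\ell\to 0$ and $X_{k+1,\ell}\to X_{k+1}$, as claimed.

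I expect the main obstacle to be the spectral step: justifying that the eigenvalues of $\mathcal{M}$, which is a sum of two Kronecker products rather than a single one, are exactly the scalars $\nu_{ij}$ (this is precisely what the simultaneous-triangularization argument supplies), and then recognizing the convex-combination structure, which is what allows a single bound to cover the whole range $0\le\omega_k<2$ and all $\alpha_k>0$ uniformly. The Cayley estimate and the final triangle inequality are routine once this structure is in place.
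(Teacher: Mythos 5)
Your proof is correct, and it arrives at the same destination as the paper---vectorize the GADI sweep \eqref{eq:Newton-GADI}, reduce convergence to showing the iteration matrix has spectral radius below one, and exploit the affine split into the ADI (Cayley) part plus the $\omega_k$ shift: your convex combination $\nu_{ij}=(1-\tfrac{\omega_k}{2})\frac{(\alpha_k-\lambda_j)(\alpha_k-\bar\mu_i)}{(\alpha_k+\lambda_j)(\alpha_k+\bar\mu_i)}+\tfrac{\omega_k}{2}$ is exactly the paper's identity $T_k(\alpha_k,\omega_k)=\frac{1}{2}[(2-\omega_k)T_k(\alpha_k)+\omega_k I]$ read eigenvalue by eigenvalue---but the decisive step is genuinely different. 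The paper bounds $\rho(T_k(\alpha_k))$ from above by $\|A_k^L\|_2\,\|A_k^R\|_2$ after a similarity transformation, and then estimates these norms by Rayleigh-quotient computations in which $\bx^*(I\otimes(A_k+A_k^*))\bx$ is replaced by $2\min_i \mathrm{Re}(\lambda_i(A_k))$; that replacement is a field-of-values bound, valid when $A_k$ is normal (or more generally when $A_k+A_k^*$ is positive definite) but not for an arbitrary positive stable $A_k$, for which $\lambda_{\min}(A_k+A_k^*)$ may be negative and $\|A_k^L\|_2\geq 1$ is possible. You instead compute the spectrum of the error-propagation matrix $\mathcal{M}$ exactly: since both legs of each Kronecker term are rational functions of the single matrices $A_k^T$ and $A_k^*$ respectively, one Schur form per factor triangularizes the whole sum simultaneously, so $\Lambda(\mathcal{M})$ is precisely the set of scalars $\nu_{ij}$, which you bound using only $\mathrm{Re}(\lambda_j(A_k))>0$ and the elementary Cayley estimate $|\alpha_k-\lambda|<|\alpha_k+\lambda|$. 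Your route therefore proves the theorem for every positive stable $A_k$, normal or not, and in addition your fixed-point/error-recursion formulation verifies explicitly that the limit is the exact Lyapunov solution $X_{k+1}$, a point the paper leaves implicit in passing from the affine recursion to convergence. What the paper's norm approach buys in exchange is the explicit, computable bound $\Psi(\alpha_k)$ on $\rho(T_k(\alpha_k))$ in terms of $\nu_n=\max_j\sigma_j(A_k)$ and $a_\mu=\min_j \mathrm{Re}(\lambda_j(A_k))$, which is what drives the quasi-optimal parameter selection in Theorem \ref{thm:para}; your exact-spectrum computation does not directly supply such a bound, though it could be post-processed into one by optimizing over the $\nu_{ij}$.
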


\begin{proof}
Taking the operation ``vec" from the GADI iteration scheme \eqref{eq:Newton-GADI}
yields
\begin{align}
\begin{cases}
(\alpha_{k} I+ I\otimes A_k^*) \mbox{vec}(X_{k+1,\ell+\frac{1}{2}})
=(\alpha_{k} I-A^T_k\otimes I)\mbox{vec}(X_{k+1,\ell})+\mbox{vec}(F(X_k)),\\
(\alpha_{k} I+ A^T_k\otimes I)\mbox{vec}(X_{k+1,\ell+1})
=[A^T_k\otimes I-(1-\omega_{k})\alpha_{k} I]\mbox{vec}(X_{k+1,\ell})
+(2-\omega_{k})\alpha_{k} \cdot \mbox{vec}(X_{k+1,\ell+\frac{1}{2}}).
\end{cases}
\label{eq:CARE-GADI-vec-1}
\end{align}
Denote $x=\mbox{vec}(X)$ and
$f(X_k)=\mbox{vec}(F(X_k))$.
The iteration scheme \eqref{eq:CARE-GADI-vec-1} is equivalent to
\begin{align}
\begin{cases}
(\alpha_{k} I+ I\otimes A_k^*) x_{k+1,\ell+\frac{1}{2}}
=(\alpha_{k} I-A^T_k\otimes I) x_{k+1,\ell}+ f(X_k),\\
(\alpha_{k} I+ A^T_k\otimes I) x_{k+1,\ell+1}
=[A^T_k\otimes I-(1-\omega_{k})\alpha_{k} I] x_{k+1,\ell}
+(2-\omega_{k})\alpha_{k} x_{k+1,\ell+\frac{1}{2}}.
\end{cases}
\label{eq:CARE-GADI-vec}
\end{align}
As $\alpha_{k}>0$ and $A_k$ is stable,
then $\alpha_{k} I+ I\otimes A_k^*$ and $\alpha_{k} I+ A^T_k\otimes I$
are nonsingular. By \eqref{eq:CARE-GADI-vec}, we get
\begin{align*}
x_{k+1,\ell+1}
&=(\alpha_{k} I+ A^T_k\otimes I)^{-1}
\Big\{ \Big[A^T_k\otimes I-(1-\omega_{k})\alpha_{k} I \Big]
+(2-\omega_{k})\alpha_{k}
(\alpha_{k} I+ I\otimes A_k^*)^{-1}(\alpha_{k} I-A^T_k\otimes I)\Big\} \\
&~~~~~~~~~~~~~~~~~~~~~~~~~~
x_{k+1,\ell}+ (2-\omega_{k})\alpha_{k} (\alpha_{k} I+ A^T_k\otimes I)^{-1}
(\alpha_{k} I+ I\otimes A_k^*)^{-1}f(X_k)\\
&=(\alpha_{k} I+ A^T_k\otimes I)^{-1}(\alpha_{k} I+ I\otimes A_k^*)^{-1}
\Big[\alpha_{k}^2 I+ (I\otimes A_k^*)( A^T_k\otimes I)
-(1-\omega_{k})\alpha_{k} C_k
\Big]
x_{k+1,\ell}\\
&~~~~~~~~~~~~~~~~~~~~~~~~~~
+ (2-\omega_{k})\alpha_{k} (\alpha_{k} I+ A^T_k\otimes I)^{-1}
(\alpha_{k} I+ I\otimes A_k^*)^{-1}f(X_k)
\end{align*}
where $C_k=I\otimes A_k^*+A^T_k\otimes I$.
Denote
\begin{align*}
&T_k(\alpha_{k},\omega_{k})=(\alpha_{k} I+ A^T_k\otimes I)^{-1}(\alpha_{k} I+ I\otimes A_k^*)^{-1}
\Big[\alpha_{k}^2 I+ (I\otimes A_k^*)( A^T_k\otimes I)
-(1-\omega_{k})\alpha_{k} C_k \Big],\\
&G_k(\alpha_{k},\omega_{k})=(2-\omega_{k})\alpha_{k} (\alpha_{k} I+ A^T_k\otimes I)^{-1}
(\alpha_{k} I+ I\otimes A_k^*)^{-1},
\end{align*}
thus
\begin{align}
x_{k+1,\ell+1}=T_k(\alpha_{k},\omega_{k})x_{k+1,\ell}+G_k(\alpha_{k},\omega_{k})f(X_k).
\label{eq:iteration scheme}
\end{align}
Next, we prove that the iteration matrix $\rho(T_k(\alpha_{k},\omega_{k}))<1$ for $0<\alpha_{k}$
and $0\leq \omega_{k}<2$. Note that
\begin{align*}
2\alpha_{k} C_k=
-(\alpha_{k} I- I\otimes A_k^*)(\alpha_{k} I- A^T_k\otimes I)
+(\alpha_{k} I+ I\otimes A_k^*)(\alpha_{k} I+ A^T_k\otimes I),
\end{align*}
then
\begin{align}
T_k(\alpha_{k},\omega_{k})&=(\alpha_{k} I+ A^T_k\otimes I)^{-1}(\alpha_{k} I+ I\otimes A_k^*)^{-1}
\Big[\alpha_{k}^2 I+ (I\otimes A_k^*)( A^T_k\otimes I)
-(1-\omega_{k})\alpha_{k} C_k \Big]\notag\\
&=(\alpha_{k} I+ A^T_k\otimes I)^{-1}(\alpha_{k} I+ I\otimes A_k^*)^{-1}
\Big[(\alpha_{k} I- I\otimes A_k^*)(\alpha_{k} I- A^T_k\otimes I)
+\omega_{k}\alpha_{k} C_k \Big]\notag\\
&=\frac{1}{2}\Big\{(2-\omega_{k})
(\alpha_{k} I+ A^T_k\otimes I)^{-1}(\alpha_{k} I+ I\otimes A_k^*)^{-1}
(\alpha_{k} I- I\otimes A_k^*)(\alpha_{k} I- A^T_k\otimes I)
+\omega_{k} I
\Big\}\notag\\
&=\frac{1}{2}[(2-\omega_{k})T_k(\alpha_{k})+\omega_{k} I]
\label{eq:T-alpha-omega}
\end{align}
with
\begin{align}
T_k(\alpha_{k})=(\alpha_{k} I+ A^T_k\otimes I)^{-1}(\alpha_{k} I+ I\otimes A_k^*)^{-1}
(\alpha_{k} I- I\otimes A_k^*)(\alpha_{k} I- A^T_k\otimes I).
\label{eq:T-alpha}
\end{align}
Using \eqref{eq:T-alpha-omega}, it is evident that
\begin{align*}
\lambda_i(T_k(\alpha_{k},\omega_{k}))
=\frac{1}{2}[(2-\omega_{k})\lambda_i(T_k(\alpha_{k}))+\omega_{k} ](i=1,2,\cdots,n),
\end{align*}
and
\begin{align*}
\rho(T_k(\alpha_{k},\omega_{k}))
\leq \frac{1}{2}[(2-\omega_{k})\rho(T_k(\alpha_{k}))+\omega_{k} ].
\end{align*}
Obviously, the matrix $T_k(\alpha_{k})$ defined in \eqref{eq:T-alpha} is similar to
\begin{align*}
\hat{T}_k(\alpha_{k})=(\alpha_{k} I+ I\otimes A_k^*)^{-1}
(\alpha_{k} I- I\otimes A_k^*)(\alpha_{k} I- A^T_k\otimes I)
(\alpha_{k} I+ A^T_k\otimes I)^{-1}
\end{align*}
through the matrix $\alpha_{k} I+ A^T_k\otimes I$.
Thus we can obtain
\begin{align}
\rho(T_k(\alpha_{k}))
&\leq
\|(\alpha_{k} I+ I\otimes A_k^*)^{-1}(\alpha_{k} I- I\otimes A_k^*)\|_2\cdot
\|(\alpha_{k} I- A^T_k\otimes I)(\alpha_{k} I+ A^T_k\otimes I)^{-1}\|_2 \notag\\
&\leq
\|(\alpha_{k} I+ I\otimes A_k^*)^{-1}(\alpha_{k} I- I\otimes A_k^*)\|_2\cdot
\|(\alpha_{k} I- A^T_k\otimes I)(\alpha_{k} I+ A^T_k\otimes I)^{-1}\|_2 \notag\\
&=\|A^L_k\|_2\|A^R_k\|_2,
\label{eq:boundrhoALR}
\end{align}
where $A^L_k=(\alpha_{k} I+ I\otimes A_k^*)^{-1}(\alpha_{k} I- I\otimes A_k^*)$
and $A^R_k=(\alpha_{k} I- A^T_k\otimes I)(\alpha_{k} I+ A^T_k\otimes I)^{-1}$.
Then, for $\bx\in\bbC^{n^2\times 1}$, we get
\begin{align*}
\|A^L_k\|^2_2
&=\max_{\|\bx\|_2=1}
\frac{\|(\alpha_{k} I- I\otimes A_k^*)\bx\|^2_2}
{\|(\alpha_{k} I+I\otimes A_k^*)\bx\|^2_2}\\
&=\max_{\|\bx\|_2=1}
\frac{\|(I\otimes A_k^*)\bx\|^2_2-\alpha_{k} \bx^*(I\otimes A_k+I\otimes A_k^*)\bx+\alpha^2_k}
{\|(I\otimes A_k^*)\bx\|^2_2+\alpha_{k} \bx^*(I\otimes A_k+I\otimes A_k^*)\bx+\alpha^2_k}\\
&\leq \max_{\|\bx\|_2=1}
\frac{\|(I\otimes A_k^*)\bx\|^2_2- 2\alpha_{k} \min_{i} Re(\lambda_i(I\otimes A_k))+\alpha^2_k}
{\|(I\otimes A_k^*)\bx\|^2_2+2\alpha_{k} \min_{i} Re(\lambda_i(I\otimes A_k))+\alpha^2_k}\\
&\leq \frac{\|I\otimes A_k^*\|^2_2- 2\alpha_{k} \min_{i} Re(\lambda_i(I\otimes A_k))+\alpha^2_k}
{\|I\otimes A_k^*\|^2_2+2\alpha_{k} \min_{i} Re(\lambda_i(I\otimes A_k))+\alpha^2_k}\\
&= \frac{\|I\otimes A_k^*\|^2_2- 2\alpha_{k} \min_{i} Re(\lambda_i(A_k))+\alpha^2_k}
{\|I\otimes A_k^*\|^2_2+2\alpha_{k} \min_{i} Re(\lambda_i(A_k))+\alpha^2_k}.
\end{align*}
Analogously, we obtain
\begin{align*}
\|A^R_k\|^2_2
\leq \frac{\|I\otimes A_k^T\|^2_2- 2\alpha_{k} \min_{i} Re(\lambda_i(A_k))+\alpha^2_k}
{\|I\otimes A_k^T\|^2_2+2\alpha_{k} \min_{i} Re(\lambda_i(A_k))+\alpha^2_k}.
\end{align*}
Since $A_k$ is stable and $\alpha_{k}>0$, by \eqref{eq:boundrhoALR}, it follows that $\|A^L_k\|_2<1$ and $\|A^R_k\|_2<1$.
Therefore,
\begin{align*}
\rho(T_k(\alpha_{k}))< 1,~~ \mbox{and}~~
\rho(T_k(\alpha_{k},\omega_{k}))
\leq \frac{1}{2}[(2-\omega_{k})\rho(T_k(\alpha_{k}))+\omega_{k} ]<1,
\end{align*}
which implies that the iteration scheme \eqref{eq:iteration scheme} is convergent. The proof is completed.

\end{proof}

\subsection{Parameter selection}
\label{sec:parameter}


In this subsection, we first compare the convergence rate between the Newton-GADI and Newton-ADI methods.
That is, the relation between $\rho(T_k(\alpha_{k},\omega_{k}))$ and $\rho(T_k(\alpha_{k}))$ defined in Section 2.2 is discussed. Then we present a practical method  to select the quasi-optimal parameter.

\begin{thm}
\label{thm:relationADIandGADI}
Assume that the eigenvalues of $A_k$ have positive real parts.
For $j_1,~j_2\in\{1,\cdots,n^2\}$, denote
$$\rho(T_k(\alpha_{k},\omega_{k}))=\vert\zeta_{j_1}\vert=\vert a_{j_1}+i\, b_{j_1} \vert,~~\rho(T_k(\alpha_{k}))=\vert \eta_{j_2}\vert=\vert c_{j_2}+i\, d_{j_2} \vert.$$

(i) When $\vert \eta_{j_2}\vert^2\leq  c_{j_2}$, we have

\begin{align*}
\rho(T_k(\alpha_{k}))<\rho(T_k(\alpha_{k},\omega_{k}))<1.
\end{align*}

(ii) When $\vert \eta_{j_1}\vert^2> c_{j_1}$ and $0< \omega_{k} <\frac{4(\vert \eta_{j_1} \vert^2-c_{j_1})}{(1-c_{j_1})^2+d_{j_1}^2}<2,$
we have
\begin{align*}
\rho(T_k(\alpha_{k},\omega_{k}))<\rho(T_k(\alpha_{k}))<1.
\end{align*}
\end{thm}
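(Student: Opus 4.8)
The plan is to push everything through the scalar eigenvalue identity already obtained in \eqref{eq:T-alpha-omega}, namely $\lambda_i(T_k(\alpha_k,\omega_k))=\tfrac12[(2-\omega_k)\lambda_i(T_k(\alpha_k))+\omega_k]$. Writing $t=(2-\omega_k)/2\in(0,1]$ and a generic eigenvalue of $T_k(\alpha_k)$ as $\eta=c+i\,d$, the matched eigenvalue of $T_k(\alpha_k,\omega_k)$ is $\zeta=t\eta+(1-t)=(tc+1-t)+i\,td$. First I would compute the modulus gap directly, obtaining $|\zeta|^2=t^2|\eta|^2+2tc(1-t)+(1-t)^2$ and hence the factorization $|\zeta|^2-|\eta|^2=(1-t)\,B(t)$ with $B(t)=(1-|\eta|^2)+t\,(2c-1-|\eta|^2)$. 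Since $B$ is affine in $t$, I would record $B(0)=1-|\eta|^2$, $B(1)=2(c-|\eta|^2)$, and $B(t)=(1-t)B(0)+tB(1)$. Because Theorem~\ref{thm:GADIconver} already yields $\rho(T_k(\alpha_k))<1$, every eigenvalue satisfies $|\eta|<1$, so $B(0)>0$; and as $(1-t)>0$ for $\omega_k\in(0,2)$, the sign of $|\zeta|^2-|\eta|^2$ is exactly the sign of $B(t)$. The two parts of the theorem are then the two sign regimes of $B(1)$.

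For part (i) I would evaluate at the index $j_2$ achieving $\rho(T_k(\alpha_k))=|\eta_{j_2}|$. The hypothesis $|\eta_{j_2}|^2\le c_{j_2}$ gives $B(1)=2(c_{j_2}-|\eta_{j_2}|^2)\ge0$; with $B(0)>0$ and linearity this forces $B(t)>0$ on $(0,1)$, so $|\zeta_{j_2}|>|\eta_{j_2}|$. As $\zeta_{j_2}$ is a genuine eigenvalue of $T_k(\alpha_k,\omega_k)$, its modulus lower-bounds the spectral radius, so $\rho(T_k(\alpha_k,\omega_k))\ge|\zeta_{j_2}|>|\eta_{j_2}|=\rho(T_k(\alpha_k))$, while the upper bound $\rho(T_k(\alpha_k,\omega_k))<1$ is quoted from Theorem~\ref{thm:GADIconver}.

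For part (ii) I would instead evaluate at the index $j_1$ achieving $\rho(T_k(\alpha_k,\omega_k))=|\zeta_{j_1}|$. Here $|\eta_{j_1}|^2>c_{j_1}$ makes the coefficient $2c_{j_1}-1-|\eta_{j_1}|^2$ negative (since $c_{j_1}\le|\eta_{j_1}|<1$), so solving $B(t)<0$ flips the inequality to a lower threshold $t>(1-|\eta_{j_1}|^2)/(1+|\eta_{j_1}|^2-2c_{j_1})$. Substituting $t=1-\omega_k/2$ and using $(1-c_{j_1})^2+d_{j_1}^2=1+|\eta_{j_1}|^2-2c_{j_1}$ turns this into precisely $0<\omega_k<4(|\eta_{j_1}|^2-c_{j_1})/[(1-c_{j_1})^2+d_{j_1}^2]$, whence $|\zeta_{j_1}|<|\eta_{j_1}|\le\rho(T_k(\alpha_k))$. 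The asserted upper bound $(\cdots)<2$ then reduces, after clearing the denominator, to $|\eta_{j_1}|^2<1$, which already holds.

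The main obstacle I anticipate is bookkeeping rather than estimation: the two spectral radii are attained at possibly different indices, so the scalar comparison of $|\zeta_i|$ with $|\eta_i|$ must be applied at the correct index in each part — the ADI-dominant $\eta_{j_2}$ in (i), to get a one-sided lower bound on $\rho(T_k(\alpha_k,\omega_k))$, and the GADI-dominant $\zeta_{j_1}$ in (ii), to get a one-sided upper bound. Aligning these one-sided estimates with the claimed strict chains is the only delicate step; the remaining work — the sign flip on dividing by the negative $t$-coefficient and the identity $(1-c)^2+d^2=1+|\eta|^2-2c$ — is routine.
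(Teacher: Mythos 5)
Your proposal is correct and takes essentially the same route as the paper: both start from the matched-eigenvalue identity coming from \eqref{eq:T-alpha-omega} and compute the modulus gap $4\vert\zeta_j\vert^2-4\vert\eta_j\vert^2=[(1-c_j)^2+d_j^2]\,\omega_k^2-4(\vert\eta_j\vert^2-c_j)\,\omega_k$, which your factorization $\vert\zeta\vert^2-\vert\eta\vert^2=(1-t)B(t)$ with $t=(2-\omega_k)/2$ reproduces exactly after using $(1-c)^2+d^2=1+\vert\eta\vert^2-2c$. If anything, your version is more careful than the paper's, which states conclusions (i) and (ii) directly from the difference formula; you additionally make explicit the index bookkeeping (evaluating at $j_2$ to lower-bound $\rho(T_k(\alpha_k,\omega_k))$ in (i) and at $j_1$ to upper-bound it in (ii)), the equivalence of the $\omega_k$-window with the sign flip, and the observation that the bound $<2$ is automatic from $\vert\eta_{j_1}\vert<1$.
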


\begin{proof}
For $\zeta_j=a_j+ib_j\in\Lambda(T_k(\alpha_{k},\omega_{k}))$, $\eta_j=c_j+id_j\in\Lambda(T_k(\alpha_{k}))(j=1,2,\cdots,n)$, then
\begin{align*}
2\vert\zeta_j \vert=\vert (2-\omega_{k})\eta_j +\omega_{k}\vert.
\end{align*}
Therefore,
\begin{align*}
4\vert\zeta_j \vert^2=\vert (2-\omega_{k})\eta_j +\omega_{k}\vert^2
=[(2-\omega_{k})c_j+\omega_{k}]^2+[(2-\omega_{k})d_j]^2
=(2-\omega_{k})^2\vert \eta_j\vert^2+\omega_{k}^2+2(2-\omega_{k})\omega_{k} c_j,
\end{align*}
\textit{i.e.,}
\begin{align*}
4\vert\zeta_j \vert^2-4\vert\eta_j \vert^2
&=(\omega_{k}^2-4\omega_{k})\vert \eta_j \vert^2+\omega_{k}^2+2(2-\omega_{k})\omega_{k} c_j\\
&=(1-2c_j+\vert\eta \vert^2)\omega_{k}^2-4(\vert \eta_j \vert^2-c_j)\omega_{k}\\
&=[(1-c_j)^2+d_j^2]\omega_{k}^2-4(\vert \eta_j \vert^2-c_j)\omega_{k}.
\end{align*}
Note that $$\rho(T_k(\alpha_{k},\omega_{k}))=\vert\zeta_{j_1}\vert,~~\rho(T_k(\alpha_{k}))=\vert \eta_{j_2}\vert.$$

(i) When $\vert \eta_{j_2}\vert^2\leq  c_{j_2}$, we have
\begin{align*}
\rho(T_k(\alpha_{k}))<\rho(T_k(\alpha_{k},\omega_{k}))<1.
\end{align*}

(ii) When $\vert \eta_{j_1}\vert^2> c_{j_1}$ and $0< \omega_{k} <\frac{4(\vert \eta_{j_1} \vert^2-c_{j_1})}{(1-c_{j_1})^2+d_{j_1}^2}<2,$ we have
\begin{align*}
\vert\zeta_{j_1} \vert<\vert \eta_{j_1}\vert,
\end{align*}
which means that $\rho(T_k(\alpha_{k},\omega_{k}))<\rho(T_k(\alpha_{k}))<1$.
\end{proof}

\begin{remark}
By Theorem \ref{thm:relationADIandGADI}, if $\vert \eta_{j_1}\vert^2> c_{j_1}$ and $0< \omega_{k} <\frac{4(\vert \eta_{j_1} \vert^2-c_{j_1})}{(1-c_{j_1})^2+d_{j_1}^2}<2,$ the Newton-GADI has a faster convergence rate than Newton-ADI proposed in \cite{E.1988iterative}. This is verified by numerical examples in Section \ref{sec:NE}.
\end{remark}

Next, we provide a method for selecting theoretical quasi-parameters by minimizing $\rho(T_k(\alpha_{k},\omega_{k}))$.
\begin{thm}
\label{thm:para}
Assume that the eigenvalues of $A_k$ have positive real parts, set
\begin{align}
\nu_n=\max_{j}\sigma_j(A_k), ~~
\min_j Re(\lambda_j(A_k))=Re(\mu_1)=Re(a_\mu+i b_\mu)=a_\mu.
\label{notation:numu}
\end{align}
Then the quasi-optimal parameter $\alpha_k$ can be obtained by
\begin{align}
\alpha_{k}^*=\mbox{arg}\min_{\alpha_{k}}
\frac{\nu_n^2- 2\alpha_{k} a_\mu+\alpha^2_k}
{\nu_n^2+2\alpha_{k} a_\mu+\alpha^2_k}
=\nu_n.
\label{para:alpha}
\end{align}
\end{thm}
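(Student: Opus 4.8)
The plan is to read the objective in \eqref{para:alpha} as the common upper bound on $\rho(T_k(\alpha_k))$ obtained in the proof of Theorem~\ref{thm:GADIconver}, and then to minimize it by a one-variable first-derivative test. First I would record the bookkeeping: since $\|I\otimes A_k^*\|_2=\|A_k\|_2=\max_j\sigma_j(A_k)=\nu_n$ and $\min_i \mbox{Re}(\lambda_i(A_k))=a_\mu$, the estimates for $\|A^L_k\|_2^2$ and $\|A^R_k\|_2^2$ derived there both collapse to the single ratio $f(\alpha_k)=(\nu_n^2-2\alpha_k a_\mu+\alpha_k^2)/(\nu_n^2+2\alpha_k a_\mu+\alpha_k^2)$, so that \eqref{eq:boundrhoALR} gives $\rho(T_k(\alpha_k))\le \|A^L_k\|_2\,\|A^R_k\|_2\le f(\alpha_k)$. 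Minimizing this guaranteed bound over $\alpha_k>0$ is precisely what is meant by the quasi-optimal parameter, and the assertion is that the minimizer equals $\nu_n$.

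Next I would differentiate $f$ as a smooth function on $(0,\infty)$. Writing $N(\alpha)=\alpha^2-2a_\mu\alpha+\nu_n^2$ and $D(\alpha)=\alpha^2+2a_\mu\alpha+\nu_n^2$ for its numerator and denominator, the quotient rule gives $f'(\alpha)=(N'D-ND')/D^2$. Expanding $N'D-ND'$, the cubic terms and all terms linear in $\alpha$ cancel, leaving the clean factorization $N'D-ND'=4a_\mu(\alpha^2-\nu_n^2)$; hence $f'(\alpha)=4a_\mu(\alpha^2-\nu_n^2)/D(\alpha)^2$.

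Finally I would apply the first-derivative test. The hypothesis that every eigenvalue of $A_k$ has positive real part gives $a_\mu=\min_j \mbox{Re}(\lambda_j(A_k))>0$, while $D(\alpha)^2>0$ for all $\alpha$, so the sign of $f'(\alpha)$ coincides with the sign of $\alpha^2-\nu_n^2$. Consequently $f$ is strictly decreasing on $(0,\nu_n)$ and strictly increasing on $(\nu_n,\infty)$, so the unique stationary point $\alpha=\nu_n$ is the global minimizer on $(0,\infty)$. This yields $\alpha_k^*=\nu_n$, as claimed.

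There is no real obstacle in this argument: it is a textbook minimization of a rational function, and the only delicate point is the algebraic cancellation producing $4a_\mu(\alpha^2-\nu_n^2)$, which I would check term by term. The single hypothesis that genuinely matters is $a_\mu>0$: if $a_\mu\le 0$ the stationary point would be a maximum rather than a minimum, so it is worth stressing that positivity of $a_\mu$ is exactly what the standing spectral assumption on $A_k$ (equivalently, stability of $A-KX_k$) supplies.
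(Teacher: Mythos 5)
Your argument is essentially identical to the paper's proof: both read the objective as the upper bound on $\rho(T_k(\alpha_k))$ coming from \eqref{eq:boundrhoALR} with $\|I\otimes A_k^*\|_2=\nu_n$, and both differentiate the same ratio $\Psi(\alpha_k)$ to obtain $\Psi'(\alpha_k)=4a_\mu(\alpha_k^2-\nu_n^2)/(\nu_n^2+2\alpha_k a_\mu+\alpha_k^2)^2$, concluding $\alpha_k^*=\nu_n$. Your explicit sign analysis (using $a_\mu>0$ to get strict decrease on $(0,\nu_n)$ and strict increase on $(\nu_n,\infty)$) simply fills in the step the paper labels ``easy to show,'' so the two proofs coincide.
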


\begin{proof}
Using the bound defined by \eqref{eq:boundrhoALR}, we have
\begin{align*}
\rho^2(T_k(\alpha_{k}))
&\leq \|A_L\|^2_2\|A_R\|^2_2\\
&\leq \frac{\|I\otimes A_k^*\|^2_2- 2\alpha_{k} \min_{i} Re(\lambda_i(A_k))+\alpha^2_k}
{\|I\otimes A_k^*\|^2_2+2\alpha_{k} \min_{i} Re(\lambda_i(A_k))+\alpha^2_k} \cdot
\frac{\|I\otimes A_k^T\|^2_2- 2\alpha_{k} \min_{i} Re(\lambda_i(A_k))+\alpha^2_k}
{\|I\otimes A_k^T\|^2_2+2\alpha_{k} \min_{i} Re(\lambda_i(A_k))+\alpha^2_k}\\
&=\frac{\nu_n^2- 2\alpha_{k} a_\mu+\alpha^2_k}
{\nu_n^2+2\alpha_{k} a_\mu+\alpha^2_k} \cdot
\frac{\nu_n^2- 2\alpha_{k}a_\mu+\alpha^2_k}
{\nu_n^2+2\alpha_{k}a_\mu +\alpha^2_k},
\end{align*}
\textit{i.e.,}
\begin{align*}
\rho(T_k(\alpha_{k}))\leq
\frac{\nu_n^2- 2\alpha_{k} a_\mu+\alpha^2_k}
{\nu_n^2+2\alpha_{k} a_\mu+\alpha^2_k},
\end{align*}
where $\nu_n$ and $a_\mu$ are given in \eqref{notation:numu}.
Denote
\begin{align*}
\Psi(\alpha_{k})=\frac{\nu_n^2- 2\alpha_{k} a_\mu+\alpha_{k}^2}
{\nu_n^2+2\alpha_{k} a_\mu+\alpha_{k}^2}.
\end{align*}
Then
\begin{align*}
\frac{\partial\Psi(\alpha_{k})}{\partial\alpha_{k}}
=\frac{4 a_\mu (\alpha_{k}^2-\nu_n^2)}
{(\nu_n^2+2\alpha_{k} a_\mu+\alpha_{k}^2)^2}.
\end{align*}
It is easy to show that $\Psi(\alpha_{k})$ gets to its minimum
when $\alpha_{k}=\nu_n$, \textit{i.e.,}
$$\alpha_{k}^*=\mbox{arg}\min\limits_{\alpha_k}\Psi(\alpha_k)=\nu_n,$$
where $\nu_n$ depends on the matrix $A_k$.
\end{proof}

\begin{remark}
In terms of Theorem \ref{thm:para}, we usually select $\omega_k=1$ and $\alpha_k^*$ given in \eqref{para:alpha}
as the quasi-optimal parameters. It can be seen that this parameter choice is reasonable using the residual analysis in Section \ref{sec:NE}.
\end{remark}

\section{Numerical experiments}
\label{sec:NE}

In this section, we use several examples of the quadratic optimal control
to show the numerical feasibility and effectiveness of the Newton-GADI and inexact Newton-GADI (InNewton-GADI) algorithms.
The whole process is performed on a computer with Intel Core 3.20GHz CPU, 4.00GB RAM and MATLAB R2017a. IT(inn) and IT(out) represent the number of inter iteration steps and
outer iteration steps, respectively. Denote IT(cumul) by the sum of inter and outer iteration steps,
IT(ave) by the number of the GADI iterative steps required by the average Newton step, and CPU by the computing time.
The notation $\mbox{Tri}(c,a,b)_{n\times n}$ demonstrates
the $n\times n$ size tridiagonal matrix
\begin{align*}
\begin{pmatrix}
a & b &        &  &\\
c & a & b      &   &\\
  & c & \ddots & \ddots&\\
  &   &\ddots  & \ddots &b\\
  &   &        &   c    &a
\end{pmatrix}.
\end{align*}

In our numerical experiment, set the outer iteration tolerance
$\varepsilon_{out}=1\times 10^{-8},~\varepsilon_{inn}=1\times 10^{-8}$,
$\ell_{\max}=1000,~k_{\max}=1000$.
In the inexact Newton-GADI algorithm, set $\eta_k=1/(k^3+1)$.
Denote the numerical Hermitian semi-definite solution as $\tilde{X}$, the finally normalized residual as
\begin{align*}
\mbox{NRes}(\tilde{X})
=\frac{\|A^*\tilde{X}+\tilde{X}A-\tilde{X}K\tilde{X}+Q\|_2}
{\|A^*\tilde{X}\|_2+\|\tilde{X}A\|_2+\|\tilde{X}K\tilde{X}\|_2+\|Q\|_2},
\end{align*}
and the error of the inter iteration in $k$-th Newton iteration step as
\begin{align*}
R(X_k)=A_k^*\tilde{X}^{in}_k+\tilde{X}^{in}_kA_k-X_kKX_k-Q,
\end{align*}
with $\tilde{X}^{in}_k$ is the numerical solution of Lyapunov equation
in $k$-th Newton iteration step.

When $\omega_{k}=0$, the Newton-GADI reduces to Newton-ADI proposed in \cite{E.1988iterative}.
It can be seen that the Newton-GADI converges faster than Newton-ADI in the following examples.

\begin{example}
Consider the linear time-invariant system of the form
\begin{align*}
\dot{\bp}(t)=A\bp(t)+B\bu(t)=\begin{pmatrix}
-2+10\,i & 0  & -1\\
0  & -1+10\,i & 0\\
-1 & -1 & -2\,i
\end{pmatrix}\bp(t)
+\begin{pmatrix}
-2 & 0 & -1\\
0 & -1 & -1\\
1 & 0 & -2
\end{pmatrix}\bu(t).
\end{align*}
We wish to minimize the cost function $J$ defined in \eqref{eq:def-cost-J}.
The cost penalty matrices are
\begin{align*}
R = \begin{pmatrix}
1 & 0  & 0\\
0  & 1 & 0\\
0 & 0 & 4
\end{pmatrix},~
Q=\begin{pmatrix}
0 & 0 & 0\\
0 & 1 & 0\\
0 & 0 & 5
\end{pmatrix}.
\end{align*}
Then
$$
K=BR^{-1}B^{T}=\begin{pmatrix}
17/4 & 1/4 & -3/2\\
1/4  & 5/4 & 1/2\\
-3/2 & 1/2 & 2
\end{pmatrix}.
$$
\end{example}
We can verify that $K$ is positive definite and $Q$ is semi-positive definite.
The pair $(A,K)$ is stabilize as
\begin{align*}
\Lambda(A-KI_3)=\{-1.9936-1.9179\,i, -6.2704+9.9858\,i, -2.2360 + 9.9321\,i\}.
\end{align*}
The pair $(A,Q)$ is detectable as
\begin{align*}
\Lambda(Q^*-A^*F)=\{  -2.0000 + 0.0000\,i, -3.5000 - 1.6583\,i, -3.5000 + 1.6583\,i\}
\end{align*}
with
\begin{align*}
F=\begin{pmatrix}
-0.2785 - 0.5729\,i &  0 &  0.2387 - 0.3660\,i\\
-0.1651 - 0.0121\,i & -0.0297 - 0.2970\,i &  0.2773 - 0.3460\,i\\
0.2865 - 1.6393\,i &  0 &  0.1830 + 3.1194\,i
\end{pmatrix}.
\end{align*}
The numerical Hermitian semi-definite solution is
\begin{align*}
\tilde{X}= \begin{pmatrix}
-0.0020+ 0.0120i & -0.0034 + 0.0121i  &-0.1269-0.0611i\\
-0.0034 + 0.0121i&   0.0001 + 0.0618i &-0.1251-0.0780i\\
-0.1269 - 0.0611i&  -0.1251 - 0.0780i & 1.1794-0.9689i
\end{pmatrix}.
\end{align*}

We know that the approximate solution of the Lyapunov equation
can be obtained by using the inexact algorithm at each Newton step.
When the error generated by the inexact algorithm is very large, it will affect
the convergence rate in next step.
Figure \ref{fig1:errorinn} displays the error $\|R(X_k)\|_2$ of
the inexact inter iterative algorithm in each Newton step.
Moreover, due to the large error $\|R(X_2)\|_2$ caused by the GADI in the second iteration step,
from Figure \ref{fig1:IT}, we know that the inner iteration step exceeds the maximum and is not convergent in the third Newton step.
Therefore, we reset $\eta_k=\frac{1}{k^4+1}$ in the inexact Newton-ADI algorithm,
see Figure \ref{fig1:errorinn2}.

\begin{figure}[!htbp]
\centering
\subfigure{
\includegraphics[width=6in]{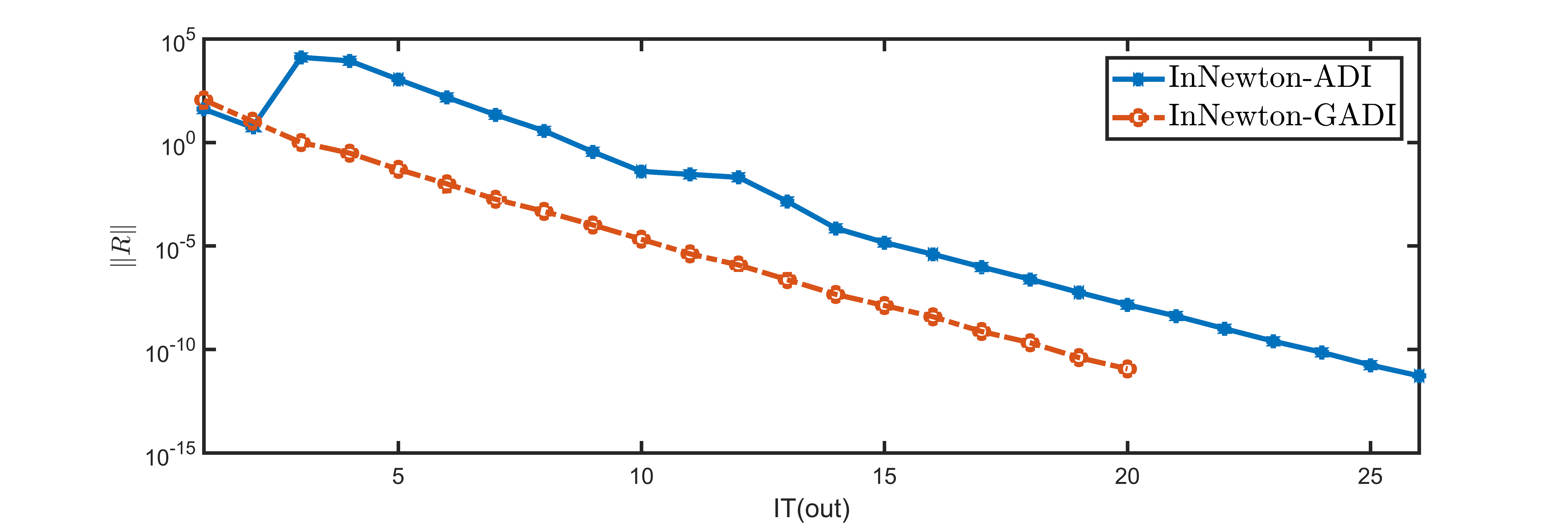}}
\caption{Comparing the inner iteration accuracy of two inexact algorithm with $\eta_k=\frac{1}{k^3+1}$.}\label{fig1:errorinn}
\end{figure}

\begin{figure}[!htbp]
\centering
\subfigure{
\includegraphics[width=6in]{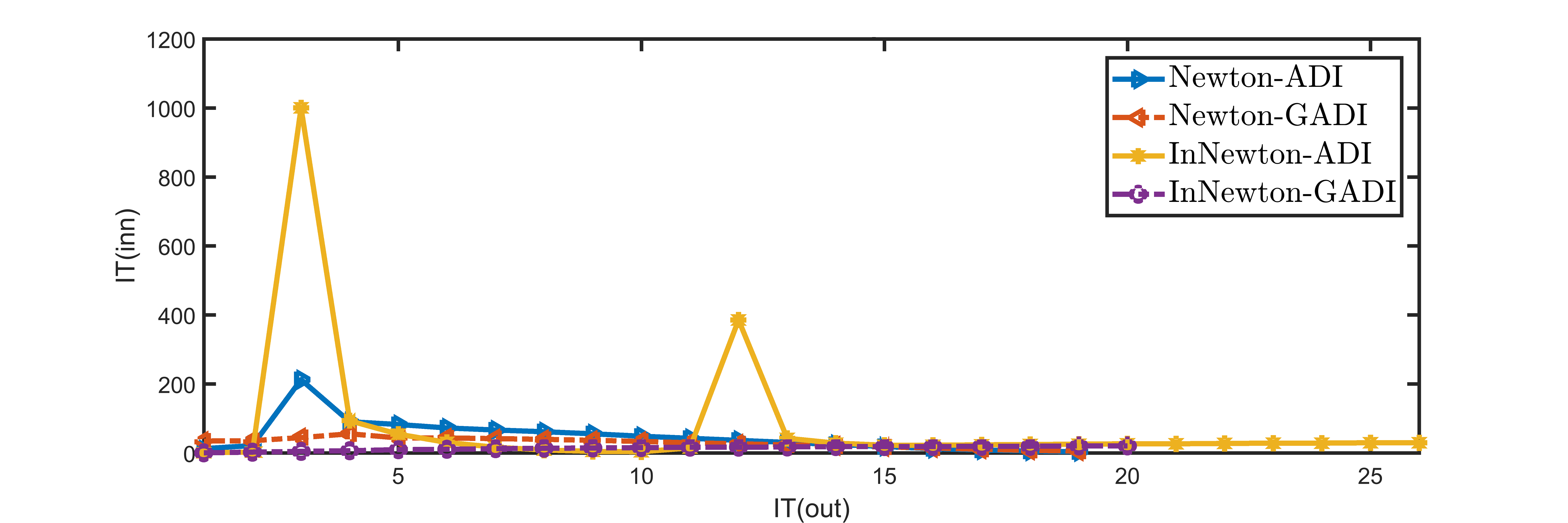}}
\caption{Comparing the number of inner iteration steps of
Newton-ADI, Newton-GADI, InNewton-ADI and InNewton-GADI at each Newton step
with $\eta_k=\frac{1}{k^3+1}$.}\label{fig1:IT}
\end{figure}

\begin{figure}[!htbp]
\centering
\subfigure{
\includegraphics[width=6in]{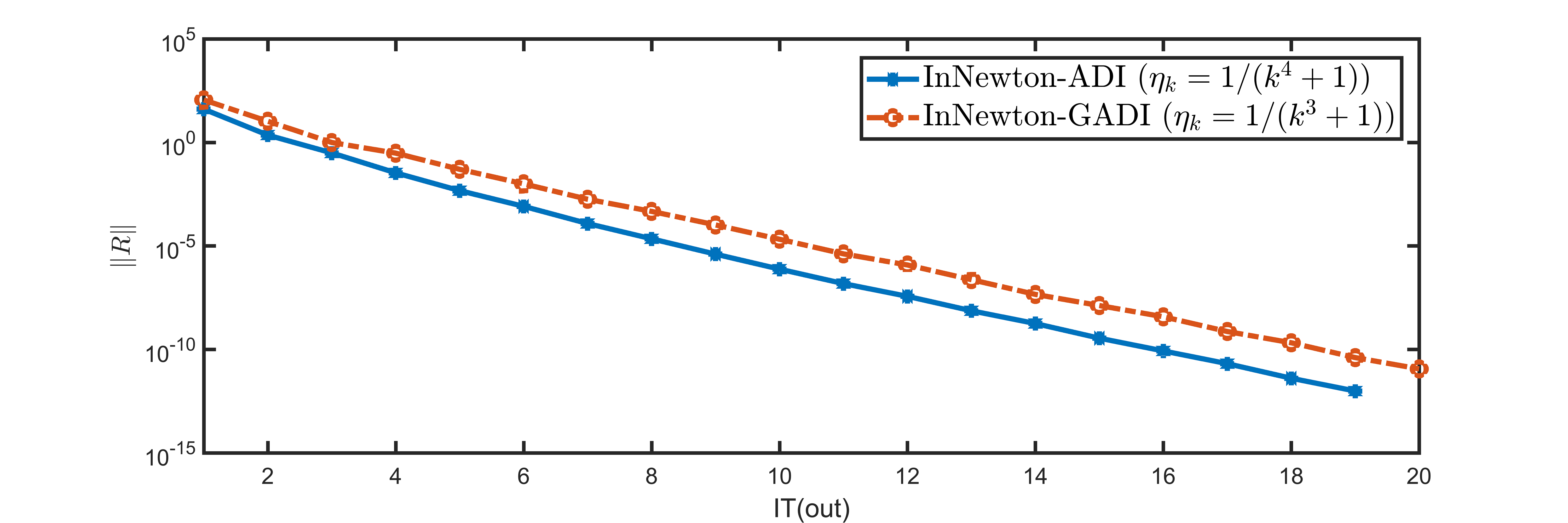}}
\caption{Comparing two inexact algorithms.}\label{fig1:errorinn2}
\end{figure}

Figure \ref{fig1:error2} shows the convergence process of the four algorithms,
including the Newton-ADI ($\omega=0$), the Newton-GADI,  the InNewton-ADI ($\omega=0$)
and the InNewton-GADI methods.
It is not difficult to find that these algorithms
have almost the same convergence rate.
\begin{figure}[!htbp]
\centering
\subfigure{
\includegraphics[width=6in]{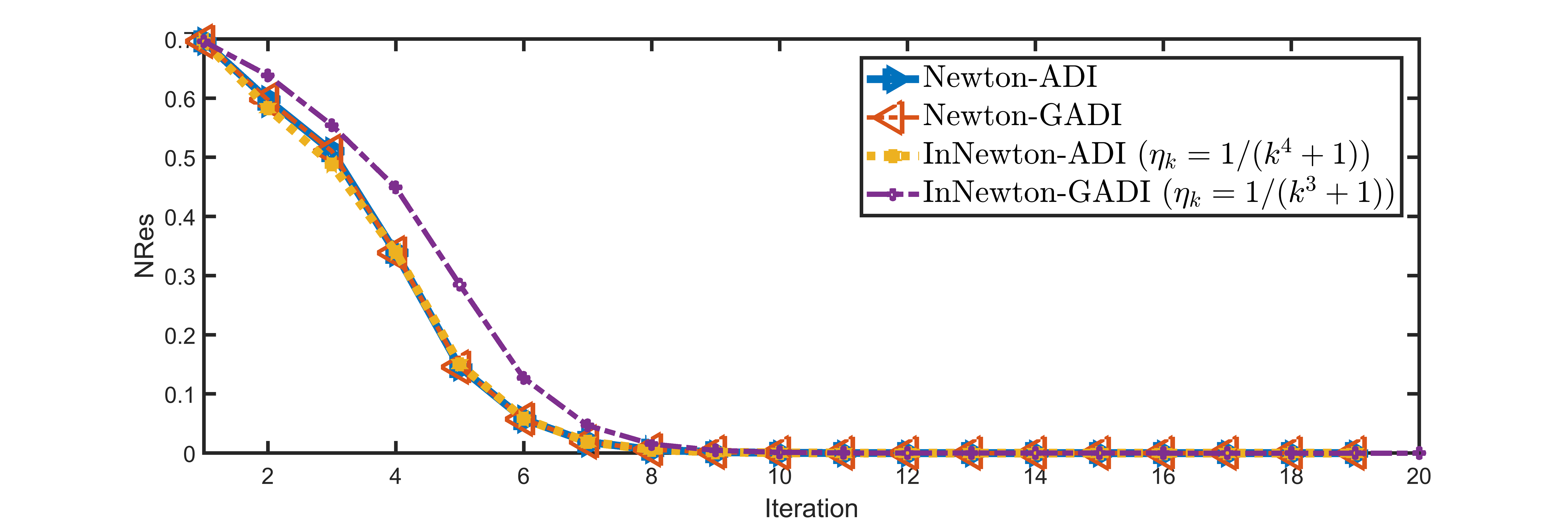}}
\caption{Comparing the convergence rate of the outer iteration for
Newton-ADI, Newton-GADI, InNewton-ADI and InNewton-GADI.}\label{fig1:error2}
\end{figure}

Figure \ref{fig1:IT2} shows that the number of the GADI steps
required for each Newton step. It is easy to see that the inexact Newton-GADI method
has the least number of inter iteration steps among the four algorithms. The total number of steps in the convergence process
is the least compared other methods, see Table \ref{tab:EG1}
for details.

\begin{figure}[!htbp]
\centering
\subfigure{
\includegraphics[width=6in]{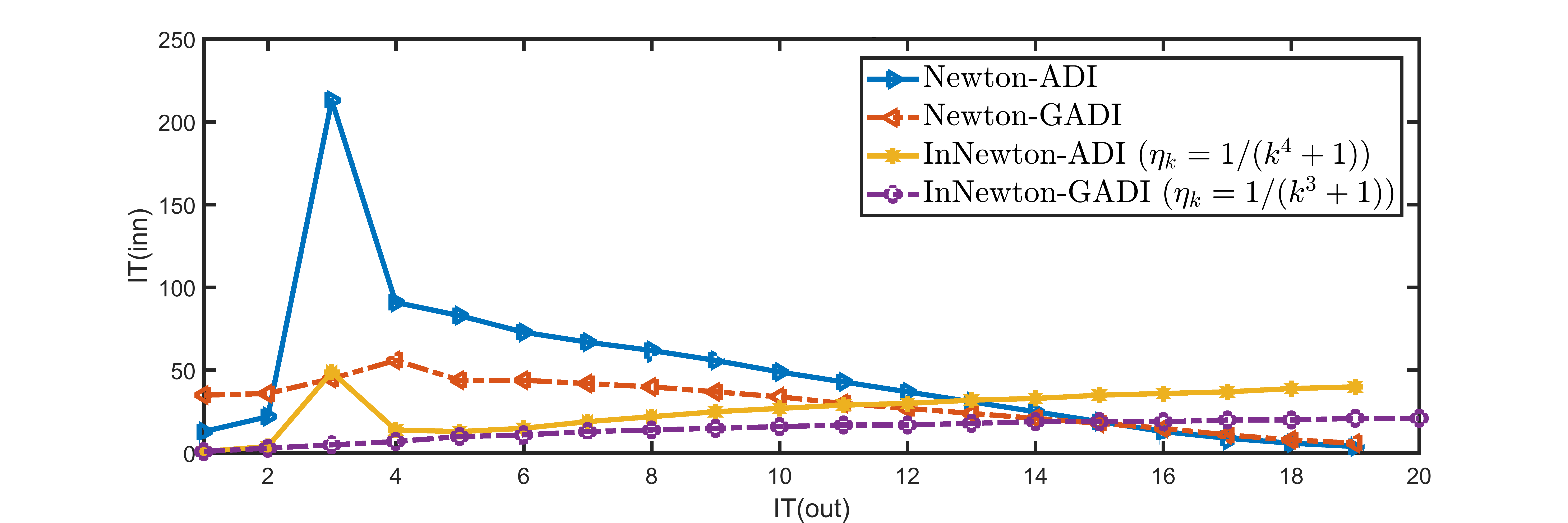}}
\caption{Comparing the number of inner iteration steps of
Newton-ADI, Newton-GADI, InNewton-ADI and InNewton-GADI at each Newton step.}\label{fig1:IT2}
\end{figure}


\begin{table}[!hptb]
\vspace{-0.4cm}
\centering
\caption{Numerical results of Newton-ADI, Newton-GADI, InNewton-ADI and InNewton-GADI.}\label{tab:EG1}
\vspace{0.2cm}
\renewcommand\arraystretch{1.3}
\setlength{\tabcolsep}{3mm}
{\begin{tabular}{c|c|c|c|c|c}\hline
             &$\omega_k^*$  &  $NRes(\tilde{X})$ &IT(out)    & IT(ave)  &IT(cumul)   \\ \hline
Newton-ADI   & 0          &8.1384e-09          &19          &48.21     & 916\\ \hline
Newton-GADI  & 1       & 8.2030e-09            & 19         &30.16     & 573    \\ \hline
InNewton-ADI & 0          & 8.4516e-09         & 26       &26.32       & 500  \\ \hline
InNewton-GADI &1          & 6.6070e-09         & 20       &14.30       & 286  \\ \hline
\end{tabular}}
\end{table}

\begin{example}
Consider the  linear time-invariant system of the form
\begin{align*}
\dot{\bp}(t)=A\bp(t)+B\bu(t)
=\begin{pmatrix}
0 & -1  & 0 & 0\\
1  &0  & -1 & 0\\
0 & 1  &  0  &-1\\
0 & 0  &  1  & 0
\end{pmatrix}\bp(t)
+10^{-3}\times \begin{pmatrix}
    3  & -50 &   1 &   2\\
   1  & -3 &  -2&   1\\
   -3 &  1&  3 &   4\\
   3 & -1&  -4 &  3
\end{pmatrix}\bu(t).
\end{align*}
We wish to minimize the cost function $J$ defined in \eqref{eq:def-cost-J}.
The cost penalty matrices are
\begin{align*}
R = I_4,~
Q= \begin{pmatrix}
    0.0025  &       0 &        0  &       0\\
         0  &  0.0111 &   0.0025  &       0\\
         0  &  0.0025 &   1.0006  &  0.0200\\
         0  &       0 &   0.0200  &  0.0004
\end{pmatrix},~~\mbox{therefore}~~K=BB^{T}.
\end{align*}
\end{example}

It is easy see that $K$ is semi-positive definite and
$Q$ is semi-positive definite. The pair $(A,K)$ is stabilize as
\begin{align*}
\Lambda(A-KI_4)=\{-0.0004 + 1.6180\,i, -0.0004 - 1.6180\,i, -0.0009 + 0.6180\,i,-0.0009 - 0.6180\,i\}.
\end{align*}
The pair $(A,Q)$ is detectable as
\begin{align*}
\Lambda(Q^*-A^*F)=\{
  -0.0013 +50.0608\,i,
  -0.0013 -50.0608\,i,
  -0.0009 + 7.6862\,i,
  -0.0009 - 7.6862i\,\}
\end{align*}
with
\begin{align*}
F=\begin{pmatrix}
   17.5243 &   0.3931 &  -8.2505 &  -0.3925\\
    0.3931 &  25.8550 &   0.3901 &  -8.2707\\
   -8.2505 &   0.3901 &  25.8338 &  -0.0035\\
   -0.3925 &  -8.2707 &  -0.0035 &  17.5505
\end{pmatrix}.
\end{align*}
The numerical Hermitian semi-definite solution is
\begin{align*}
\tilde{X}= \begin{pmatrix}
   17.4818 &   0.3916 &  -8.2438 &  -0.3924\\
    0.3916 &  25.8038 &   0.3901 &  -8.2638\\
   -8.2438 &   0.3901 &  25.7818 &  -0.0035\\
   -0.3924 &  -8.2638 &  -0.0035 &  17.5055
\end{pmatrix}.
\end{align*}

Figure \ref{fign4:errorinn} displays that the error of
the inexact inter iterative algorithm with the increase of Newton step.
Figure \ref{fign4:error} shows the convergence process of the four algorithms and
reports that these algorithms have almost the same convergence rate.
\begin{figure}[!htbp]
\centering
\subfigure{
\includegraphics[width=6in]{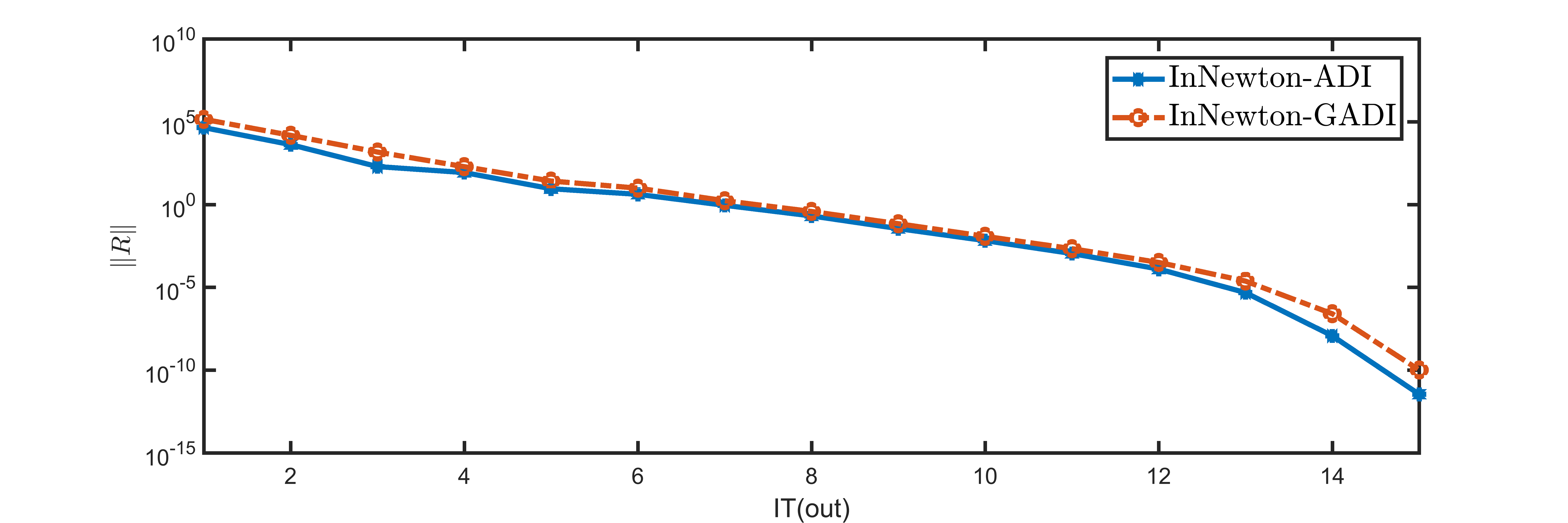}}
\caption{Comparing the inner iteration accuracy of two inexact algorithm with $\eta_k=\frac{1}{k^4+1}$.}\label{fign4:errorinn}
\end{figure}

\begin{figure}[!htbp]
\centering
\subfigure{
\includegraphics[width=6in]{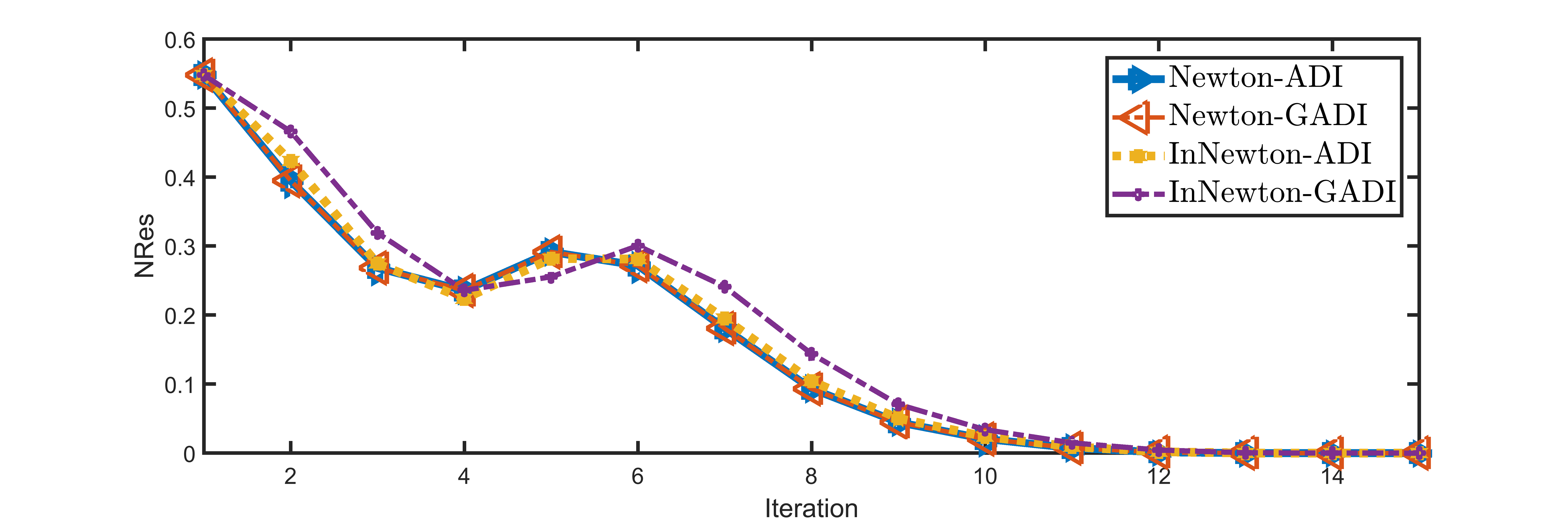}}
\caption{Comparing the convergence rate of the outer iteration for
Newton-ADI, Newton-GADI, InNewton-ADI and InNewton-GADI with $\eta_k=\frac{1}{k^4+1}$ .}\label{fign4:error}
\end{figure}

Figure \ref{fign4:IT} reports the number of the GADI steps
required for each Newton step. It can be seen that the InNewton-GADI algorithm has a clear advantage over other algorithms.

\begin{figure}[!htbp]
\centering
\subfigure{
\includegraphics[width=6in]{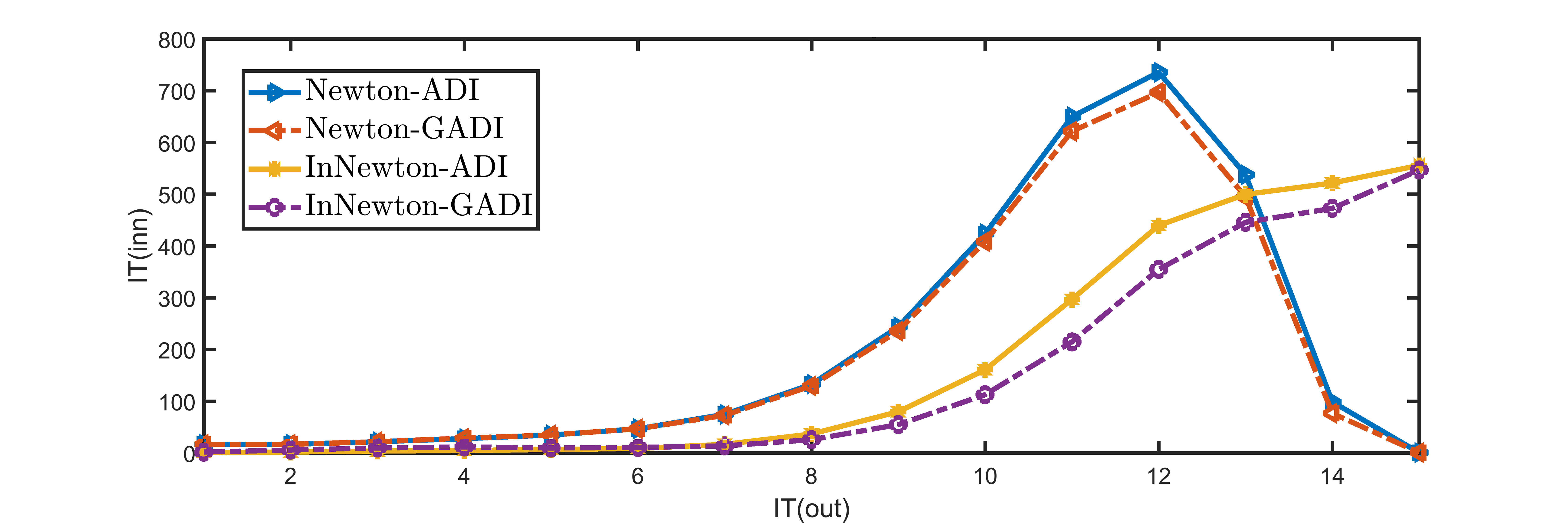}}
\caption{Comparing the number of inner iteration steps of
Newton-ADI, Newton-GADI, InNewton-ADI and InNewton-GADI at each Newton step.}\label{fign4:IT}
\end{figure}

Table \ref{tabn4:EG} displays the numerical results of these four algorithms.
It is not difficult to find that this example is ill-conditioned. Some existing ADI algorithms may be convergent slowly or not convergent, while the GADI scheme proposed in this paper
can accelerate the convergence process.
\begin{table}[!hptb]
\vspace{-0.4cm}
\centering
\caption{Numerical results of Newton-ADI, Newton-GADI, InNewton-ADI and InNewton-GADI.}\label{tabn4:EG}
\vspace{0.2cm}
\renewcommand\arraystretch{1.3}
\setlength{\tabcolsep}{3mm}
{\begin{tabular}{c|c|c|c|c|c}\hline
             &$\omega_k^*$  &  $NRes(\tilde{X})$ &IT(out)    & IT(ave)  &IT(cumul)   \\ \hline
Newton-ADI   & 0          &9.5234e-09          &15          &204.40     &3066\\ \hline
Newton-GADI  & 0.015       &9.5846e-09          &15         &193.80    & 2907    \\ \hline
InNewton-ADI & 0          & 1.2046e-10         & 15       &175.87       & 2638  \\ \hline
InNewton-GADI &0.015       &3.5208e-09         & 15       &153.00       &2295  \\ \hline
\end{tabular}}
\end{table}

\begin{example}
Consider the linear time-invariant system of the form
\begin{align*}
\dot{\bp}(t)=A\bp(t)+B\bu(t)
\end{align*}
where
\begin{align*}
A=\mbox{Tri}(-1-r,-4+8\,i,-1+r)_{n\times n},~
b=(1,0,\cdots,0)\in\bbR^{1\times n},~~B=(b^T, I_n),
\end{align*}
with $r=\dfrac{1}{2n+2}$.
To go further, we need to minimize the cost function $J$ defined in \eqref{eq:def-cost-J}.
The cost penalty matrices are $R=I_n$,~$K=BR^{-1}B^{T}$,~$Q=c^{T}c$ with $c=(\dfrac{1}{\sqrt{10}},0,\cdots,0)\in\bbR^{1\times n}$.
\end{example}

When $n=128$, Figure \ref{fig2:errorinn} displays that the error of
the inexact inter iterative algorithm with the increase of Newton step.
\begin{figure}[!htbp]
\centering
\subfigure{
\includegraphics[width=6in]{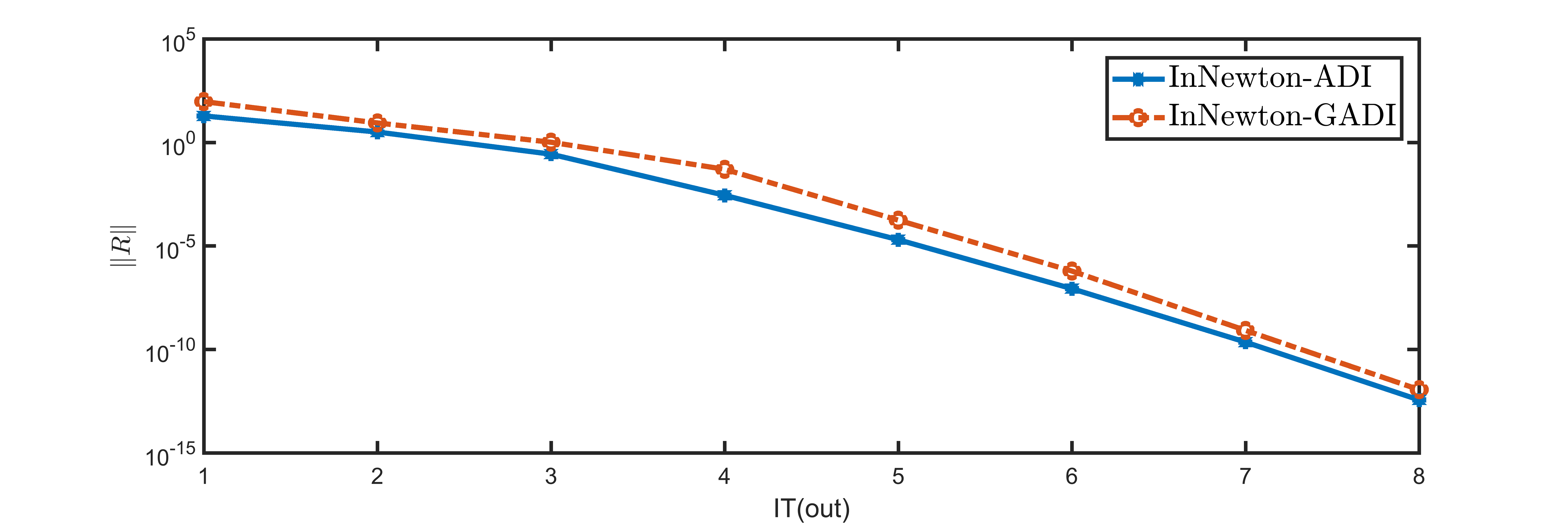}}
\caption{Comparing the inner iteration accuracy of two inexact algorithm with $\eta_k=\frac{1}{k^3+1}$ and $n=128$.}\label{fig2:errorinn}
\end{figure}

Figure \ref{fig2:error} shows the convergence process of the four algorithms and
reports that these algorithms have almost the same convergence rate.
Moreover, it is easy to find that
the convergence rate of the iteration is slow in the beginning.
From the fifth iteration step, these algorithms begin to converge rapidly.
\begin{figure}[!htbp]
\centering
\subfigure{
\includegraphics[width=6in]{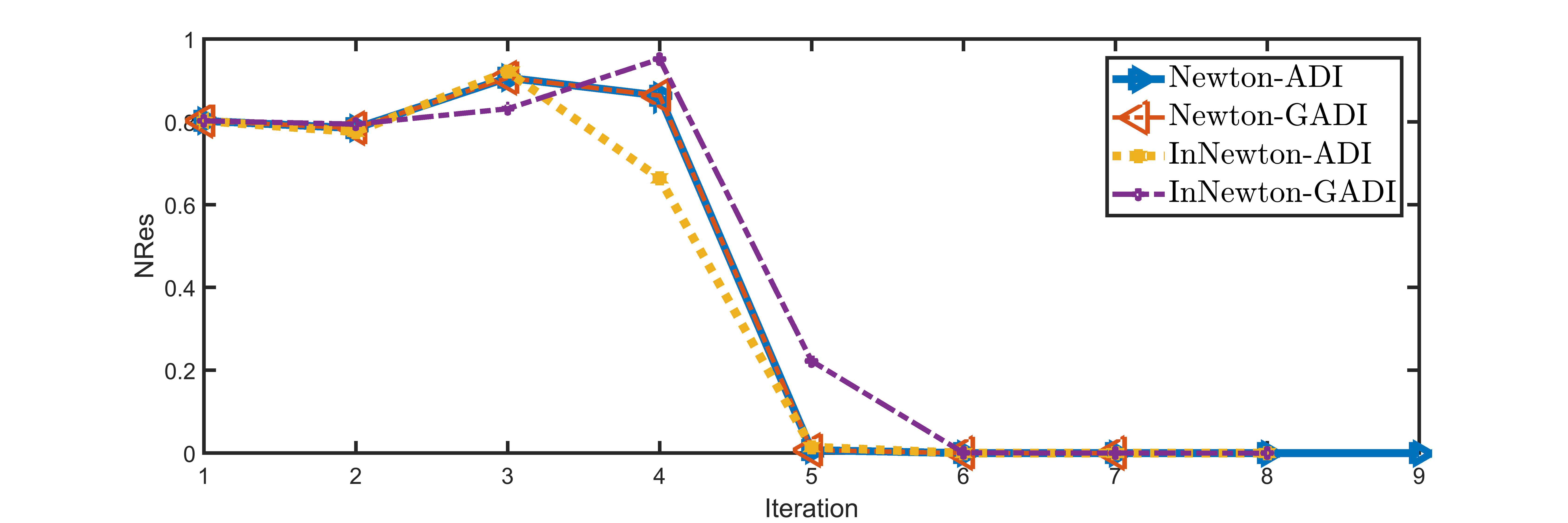}}
\caption{When $n=128$, comparing the convergence rate of the outer iteration for
Newton-ADI, Newton-GADI, InNewton-ADI and InNewton-GADI with $\eta_k=\frac{1}{k^3+1}$ .}\label{fig2:error}
\end{figure}

Figure \ref{fig2:IT} reports the number of the GADI steps
required for each Newton step. It can be seen that the InNewton-GADI algorithm has a clear advantage over other algorithms.

\begin{figure}[!htbp]
\centering
\subfigure{
\includegraphics[width=6in]{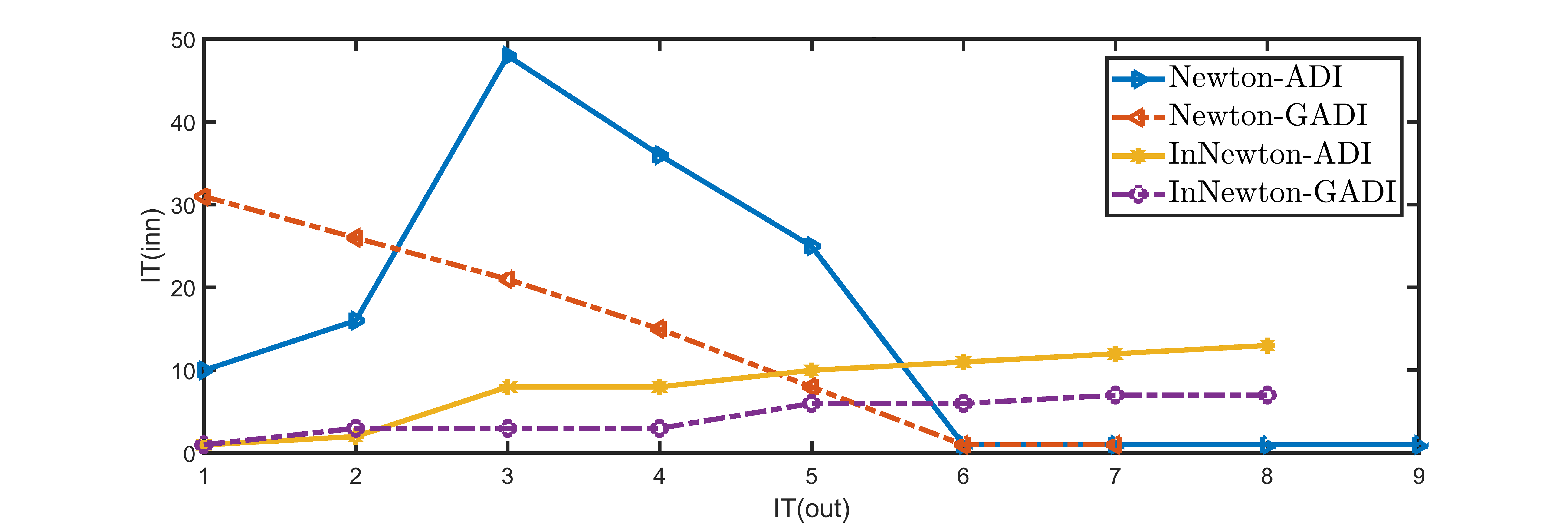}}
\caption{When $n=128$, comparing the number of inner iteration steps of
Newton-ADI, Newton-GADI, InNewton-ADI and InNewton-GADI at each Newton step.}\label{fig2:IT}
\end{figure}

Table \ref{tab:EG2} displays the numerical results of these four algorithms
when $n=64,~n=128,~n=512$ and $n=1024$.
We can see that as $n$ increases, the number of iterative steps and the finally normalized residual $NRes(\tilde{X})$ of each algorithm are almost the same,
while the CPU time increases gradually.
\begin{table}[!hptb]
\vspace{-0.4cm}
	\centering
	\caption{Numerical results of Newton-ADI, Newton-GADI, InNewton-ADI and InNewton-GADI
as $n$ increases.}\label{tab:EG2}
    \vspace{0.2cm}
	\renewcommand\arraystretch{1.3}
	\setlength{\tabcolsep}{3mm}
	{\begin{tabular}{c|c|c|c|c|c|c|c}\hline
$n$               &  Algorithm  &$\omega_k^*$  & $NRes(\tilde{X})$                &IT(out)    & IT(ave)  &IT(cumul) & CPU(s)  \\ \hline
\multirow{4}*{64} &Newton-ADI & 0          &6.3978e-09          &11    &12.73     & 140  & 0.2140 \\ \cline{2-8}
                &Newton-GADI  & 1       & 9.4256e-09            & 7     &14.71     &103  & 0.1320 \\ \cline{2-8}
                &InNewton-ADI & 0          &1.1340e-09         &8       &8.13       &65  & 0.1070\\ \cline{2-8}
                 &InNewton-GADI &1          &4.2259e-09        &8       &4.50       &36  & 0.0830\\ \hline
\multirow{4}*{128}  &Newton-ADI   & 0     &8.4557e-09          &9        &15.44     &139 &0.7540  \\ \cline{2-8}
                    &Newton-GADI  & 1       & 9.4644e-09        & 7    &14.72       &103 &0.5400   \\ \cline{2-8}
                   &InNewton-ADI & 0          &1.1531e-09         & 8    &8.13      &65  &0.5150  \\ \cline{2-8}
                   &InNewton-GADI &1          &4.2424e-09         & 8     &4.50     &36  &0.3680  \\ \hline
 \multirow{4}*{512} &Newton-ADI   & 0     &8.4857e-09          &9      &15.44     &139  &22.1390 \\ \cline{2-8}
                    &Newton-GADI  & 1       & 9.4771e-09       & 7   &14.72     & 103  &16.9060    \\ \cline{2-8}
                   &InNewton-ADI & 0          &1.1596e-09      & 8     &8.13    &65    &15.5520 \\ \cline{2-8}
                  &InNewton-GADI &1      & 4.2480e-09         & 8      &4.50    & 36   &10.8330 \\ \hline
\multirow{4}*{1024} &Newton-ADI   & 0     &8.4872e-09          &9      &15.44     &139  &256.4320 \\ \cline{2-8}
                    &Newton-GADI  & 1       & 9.4777e-09       & 7   &14.72     & 103  &201.9130    \\ \cline{2-8}
                   &InNewton-ADI & 0          &1.1600e-09      & 8     &8.13    &65  &159.0060 \\ \cline{2-8}
                  &InNewton-GADI &1      & 4.2484e-09         & 8      &4.50    & 36 &113.2540 \\ \hline
\end{tabular}}
\end{table}

\section{Conclusion}

In this paper, we have proposed the Newton-GADI algorithm to solve the complex CARE. Moreover, by inaccurately solving a Lyapunov equation in each Newton step,
 the inexact Newton-GADI algorithm is presented.
Finally, we have offered numerical experiments to compare the effectiveness.
It can be seen that the Newton-GADI method has a faster inter iterative convergence rate
than the Newton-ADI method, while the inexact Newton-GADI method is more efficient.
Therefore, the Newton-GADI and inexact Newton-GADI methods could be valid and attractive algorithms
to solve the ill-conditioned Riccati equation.
In the future, we will further study parameter selection and solving the low-rank large-scale CARE.

\nocite{*}

\end{document}